\begin{document}
\title{Generalized Normal Forms of Two-Dimensional Autonomous Systems with
a Hamiltonian Unperturbed Part}
\author{V.~V.~Basov}
\email{vlvlbasov@rambler.ru}
\affiliation{Saint Petersburg State University}
\author{A.~S.~Vaganyan}
\email{armay@yandex.ru}
\thanks{This research is supported by the Chebyshev Laboratory (Department of 
Mathematics and Mechanics, St. Petersburg State University) under RF 
Government grant 11.G34.31.0026.}
\altaffiliation{Chebyshev Laboratory, St. Petersburg State University, 14th 
Line, 29b, Saint Petersburg, 199178 Russia} 
\affiliation{Saint Petersburg State University}
\begin{abstract}
Generalized pseudo-Hamiltonian normal forms (GPHNF) and an effective method of
obtaining them are introduced for two-dimensional systems of autonomous ODEs 
with a Hamiltonian quasi-homogeneous unperturbed part of an arbitrary degree. 
The terms that can be additionally eliminated in a GPHNF are constructively
distinguished, and it is shown that after removing them GPHNF becomes a 
generalized normal form (GNF). By using the introduced method, all the GNFs 
are obtained in cases where the unperturbed part of the system is Hamiltonian 
and has monomial components, which allowed to generalize some results by 
Takens, Baider and Sanders, as well as Basov et al.
\end{abstract}
\keywords{generalized normal forms, Hamiltonian normal forms, resonance 
equation.}
\maketitle
\section{Some facts from the theory of normal forms}
\subsection{Quasi-homogeneous polynomials and vector fields}
Let $x=(x_1,x_2),$ $p=(p_1,p_2)$ and $D=(\partial_1,\partial_2)$ where 
$x_1,x_2$ are scalar variables, $p_1,p_2$ are nonnegative integers, 
and $\partial_1=\partial/\partial x_1,$ $\partial_2=\partial/\partial x_2.$

Denote by $\langle\,.\,,\,.\,\rangle$ the standard inner product on vectors, 
Euclidean or Hermitian depending on the context.

Consider vector $\gamma=(\gamma_1,\gamma_2)$ with $\gamma_1,\gamma_2\in
\mathbb N,$ $\text{GCD}(\gamma_1,\gamma_2)=1$ called \textit{weight} 
of the variable $x,$ and let $\delta=\gamma_1+\gamma_2.$
\begin{definition}[g.~d.]
\label{weight}\ \ We call the number $\langle p,\gamma\rangle$ 
\textit{the generalized degree} of a monomial $x_1^{p_1} x_2^{p_2}.$ 
\end{definition}
\begin{definition}[QHP]
\label{QHP}\ \
Polynomial $P(x)$ is called \textit{a quasi-homogeneous polynomial} of g.~d. 
$k$ with weight $\gamma$ and is denoted by $P^{[k]}_\gamma(x)$ if it is 
a linear combination of monomials of g.~d. $k.$
\end{definition}
\begin{definition}[VQHP]\ \
Vector polynomial ${\mathcal P}(x)=\bigl(P_{1}(x),P_{2}(x)\bigr)$
is called \textit{a vector quasi-homogeneous polynomial} of g.~d. $k$ with 
weight $\gamma$ and is denoted by ${\mathcal P}_\gamma^{[k]}(x),$ if its 
components $P_{1}(x),P_{2}(x)$ are QHPs of g.~d. $k+\gamma_1$ and 
$k+\gamma_2$ respectively.
\end{definition}
\begin{remark}\ \
Further, we identify each VQHP ${\mathcal P}_\gamma^{[k]}(x)=
\bigl(P_{\gamma,1}^{[k+\gamma_1]}(x),P_{\gamma,2}^{[k+\gamma_2]}(x)\bigr)$ 
with the corresponding vector field, i.~e., define the action of VQHPs on 
formal power series:
$${\mathcal P}_\gamma^{[k]}(f)(x)=P_{\gamma,1}^{[k+\gamma_1]}(x)\,\partial_1 
f(x)+P_{\gamma,2}^{[k+\gamma_2]}(x)\,\partial_2 f(x),$$
and define the Lie bracket of two VQHPs ${\mathcal P}_\gamma^{[k]}(x)= 
\bigl(P_{\gamma,1}^{[k+\gamma_1]}(x), P_{\gamma,2}^{[k+\gamma_2]}(x)\bigr)$ 
and ${\mathcal Q}_\gamma^{[l]}(x)=\bigl(Q_{\gamma,1}^{[l+\gamma_1]}(x), 
Q_{\gamma,2}^{[l+\gamma_2]}(x)\bigr):$
$$[{\mathcal P}_\gamma^{[k]}, {\mathcal Q}_\gamma^{[l]}](x)=
\bigl({\mathcal P}_{\gamma}^{[k]}(Q_{\gamma,1}^{[l+\gamma_1]})(x)-
{\mathcal Q}_\gamma^{[l]}(P_{\gamma,1}^{[k+\gamma_1]})(x),\
{\mathcal P}_{\gamma}^{[k]}(Q_{\gamma,2}^{[l+\gamma_2]})(x)-
{\mathcal Q}_\gamma^{[l]}(P_{\gamma,2}^{[k+\gamma_2]})(x)\bigr).$$
\end{remark}
\begin{definition}\ \ 
Given a vector series $\mathcal P(x)=\sum_{k=0}^\infty 
\mathcal P_\gamma^{[k]}(x),$ we call the least positive 
integer $\chi\ge0$ such that $\mathcal P_\gamma^{[\chi]}\not\equiv0$
\textit{the generalized order} of $\mathcal P.$ Denote the generalized order 
by $\mathrm{ord}_\gamma \mathcal P.$ If $\mathcal P\equiv0,$ then 
$\mathrm{ord}_\gamma \mathcal P=+\infty.$
\end{definition}
\subsection{Homological equation}
Consider the system
\begin{equation}
\label{Eq1}
\dot x={\mathcal P}_{\gamma}^{[\chi]}(x)+\mathcal X(x)\qquad\bigl(\chi\ge0,\ 
\mathcal X=(X_1,X_2),\ \mathrm{ord}_\gamma \mathcal X\ge\chi+1\bigr).
\end{equation}
Let the near-identity formal change of variables
\begin{equation}
\label{Eq2}
x=y+\mathcal Q(y)
\qquad\bigl(y=(y_1,y_2),\ \mathcal Q=(Q_1,Q_2),\ 
\mathrm{ord}_\gamma\mathcal Q\ge1\bigr)
\end{equation}
transform it into the system
\begin{equation}
\label{Eq3}
\dot y={\mathcal P}_{\gamma}^{[\chi]}(y)+\mathcal Y(y)\qquad\bigl(
\mathcal Y=(Y_1,Y_2),\ \mathrm{ord}_\gamma \mathcal Y\ge\chi+1\bigr).
\end{equation}
Then the series $\mathcal X,\mathcal Y$ and $\mathcal Q$ satisfy \textit{the homological equation} (see~\cite{basov03}):
\begin{equation}
\label{Eq4}
[\mathcal P_{\gamma}^{[\chi]},\mathcal Q_{\gamma}^{[k]}]=\widetilde{%
\mathcal Y}_{\gamma}^{[k+\chi]}-\mathcal Y_{\gamma}^{[k+\chi]}
\end{equation}
where $k\ge1,$ and $\widetilde{\mathcal Y}_{\gamma}^{[k+\chi]}$ includes only 
the components of the VQHPs $\mathcal Q_{\gamma}^{[l]}$ and 
$\mathcal Y_{\gamma}^{[l+\chi]}$ with $l=\overline{1,k-1}.$
If $\mathrm{ord}_\gamma \mathcal Q=m\ge1,$ then for $k=\overline{1,m},$ 
equation~\eqref{Eq4} takes the form
\begin{equation}
\label{Eq5}
\mathcal X_{\gamma}^{[l+\chi]}=\mathcal Y_{\gamma}^{[l+\chi]}\quad 
(l=\overline{1,m-1}),\qquad[\mathcal P_{\gamma}^{[\chi]},
\mathcal Q_{\gamma}^{[m]}]=\mathcal X_{\gamma}^{[m+\chi]}-
\mathcal Y_{\gamma}^{[m+\chi]}.
\end{equation}
\subsection{Generalized normal form (GNF)}
Denote the linear spaces of VQHPs of g.~d. $k\ge1$ and $k+\chi$ in $x$ by 
$\mathfrak V_\gamma^{[k]}$ and $\mathfrak V_\gamma^{[k+\chi]},$ and their 
dimensions by $k_\gamma=\dim\mathfrak V_\gamma^{[k]}$ and $(k+\chi)_\gamma= 
\dim\mathfrak V_\gamma^{[k+\chi]}.$

In the spaces $\mathfrak V_\gamma^{[k]}$ and $\mathfrak V_\gamma^{[k+\chi]},$
choose the standard bases $\{(x_1^{p_1}x_2^{p_2},0),(0,x_1^{p'_1}x_2^{p'_2}):\ 
\langle p,\gamma\rangle-\gamma_1=\langle p',\gamma\rangle-\gamma_2=q\},$ 
where $q$ equals $k$ and $k+\chi$ respectively, order their elements
lexicographically, and denote the corresponding matrix representation of the 
linear operator $[\mathcal P_{\gamma}^{[\chi]},\,.\,]: 
\mathfrak V_\gamma^{[k]}\rightarrow\mathfrak V_\gamma^{[k+\chi]}$ by 
$\mathbf P_{\gamma,k}^{[\chi]}.$

Assume that the matrix $\mathbf P_{\gamma,k}^{[\chi]}$ has rank 
$r_k=k_\gamma-k_\gamma^0$ where $0\le k_\gamma^0\le k_\gamma-1.$

Comparing the VQHPs $\mathcal Q_{\gamma}^{[k]},\ \widetilde{%
\mathcal Y}_{\gamma}^{[k+\chi]}$ and $\mathcal Y_{\gamma}^{[k+\chi]}$ 
to the coefficient vectors of their expansions in the standard bases 
$\mathbf Q_{\gamma}^{[k]},\ \widetilde{\mathbf Y}_{\gamma}^{[k+\chi]}$ and 
$\mathbf Y_{\gamma}^{[k+\chi]},$ represent the homological 
equation~\eqref{Eq4} as a system of linear algebraic equations:
$\mathbf P_{\gamma,k}^{[\chi]}\mathbf Q_{\gamma}^{[k]}=
\widetilde{\mathbf Y}_{\gamma}^{[k+\chi]}-\mathbf Y_{\gamma}^{[k+\chi]}.$

From the obtained equations, select a subsystem of order $r_k$ with nonzero 
determinant, and solve it with respect to $\mathbf Q_{\gamma}^{[k]}.$

Arbitrarily fix the remaining $k_\gamma^0$ coefficients of 
$\mathbf Q_{\gamma}^{[k]}.$ 

Substituting $\mathbf Q_{\gamma}^{[k]}$ in the remaining equations, we obtain 
$n_k=(k+\chi)_\gamma-r_k$ linearly independent relations on coefficients of
the VQHP $\mathcal Y_{\gamma}^{[k+\chi]}$ called \textit{resonance equations}:
$$\langle\mathbf a_i^{k},\mathbf Y_{\gamma}^{[k+\chi]}\rangle=
\langle\mathbf a_i^{k},\widetilde{\mathbf Y}_{\gamma}^{[k+\chi]}\rangle,\quad
\mathbf a_i^{k}=\mathrm{const}\qquad(i=\overline{1,n_k}).$$

Coefficients of the VQHP $\mathcal Y_{\gamma}^{[k+\chi]}$ that really 
participate in at least one of the resonance equations are called 
\textit{resonant} and the others are called \textit{nonresonant}.
\begin{definition}\ \ 
We call any set $\mathfrak Y=\cup_{k=1}^\infty\{
\mathbf Y_{\gamma,l_j}^{[k+\chi]}\}_{j=1}^{n_k}$ of resonant coefficients 
of the VQHP $\mathcal Y_{\gamma}^{[k+\chi]}$ such that for all $k\ge1$ 
$\det(\{\mathbf a_{i,l_j}^{k}\}_{i,j=1}^{n_k})\ne0$ \textit{a resonant set}.
\end{definition}
\begin{definition}[GNF]\ \ 
System~\eqref{Eq3} is called \textit{a generalized normal form} if all
coefficients of the vector series $\mathcal Y$ are zero except for the
coefficients from some resonant set $\mathfrak Y$ that have arbitrary 
values.
\end{definition}

Thereby, each GNF is generated by a certain resonant set $\mathfrak Y.$
\begin{proposition}[{\cite[Th.~2]{basov03}}]\ \ 
Given any system~\eqref{Eq1} and arbitrarily chosen resonant set 
$\mathfrak Y,$ then there exists a near-identity formal 
transformation~\eqref{Eq2} that brings it to a certain GNF~\eqref{Eq3} 
generated by $\mathfrak Y.$
\end{proposition}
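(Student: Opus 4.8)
The plan is to construct the series $\mathcal Q$ and $\mathcal Y$ recursively in the generalized degree $k$, solving the homological equation~\eqref{Eq4} one g.~d. at a time. Fix the given resonant set $\mathfrak Y=\cup_{k=1}^\infty\{\mathbf Y_{\gamma,l_j}^{[k+\chi]}\}_{j=1}^{n_k}$. I will proceed by induction on $k\ge1$, showing at each step that the VQHPs $\mathcal Q_\gamma^{[k]}$ and $\mathcal Y_\gamma^{[k+\chi]}$ can be chosen so that every coefficient of $\mathcal Y_\gamma^{[k+\chi]}$ lying outside $\mathfrak Y$ vanishes. Assembling the results over all $k$ will then produce the desired near-identity formal transformation~\eqref{Eq2} and a GNF~\eqref{Eq3} generated by $\mathfrak Y$.

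For the inductive step, suppose $\mathcal Q_\gamma^{[l]}$ and $\mathcal Y_\gamma^{[l+\chi]}$ have already been determined for all $l=\overline{1,k-1}$. By the structure of equation~\eqref{Eq4}, the VQHP $\widetilde{\mathcal Y}_\gamma^{[k+\chi]}$ depends only on these lower-order data, so its coefficient vector $\widetilde{\mathbf Y}_\gamma^{[k+\chi]}$ is known. Passing to the standard bases, the homological equation reads $\mathbf P_{\gamma,k}^{[\chi]}\mathbf Q_\gamma^{[k]}=\widetilde{\mathbf Y}_\gamma^{[k+\chi]}-\mathbf Y_\gamma^{[k+\chi]}$. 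Since the covectors $\mathbf a_i^k$ form a basis of the cokernel of $\mathbf P_{\gamma,k}^{[\chi]}$, its right-hand side lies in the image of the operator precisely when the $n_k$ resonance equations $\langle\mathbf a_i^k,\mathbf Y_\gamma^{[k+\chi]}\rangle=\langle\mathbf a_i^k,\widetilde{\mathbf Y}_\gamma^{[k+\chi]}\rangle$ are satisfied.

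To force the GNF shape, I set every coefficient of $\mathcal Y_\gamma^{[k+\chi]}$ outside $\mathfrak Y$ equal to zero. The resonance equations then collapse to the square system $\sum_{j=1}^{n_k}\mathbf a_{i,l_j}^k\,\mathbf Y_{\gamma,l_j}^{[k+\chi]}=\langle\mathbf a_i^k,\widetilde{\mathbf Y}_\gamma^{[k+\chi]}\rangle$ $(i=\overline{1,n_k})$ in the $n_k$ unknown coefficients of the resonant set. By the defining property of $\mathfrak Y$, the matrix $\{\mathbf a_{i,l_j}^k\}_{i,j=1}^{n_k}$ is nonsingular, so this system has a unique solution, which fixes $\mathcal Y_\gamma^{[k+\chi]}$ entirely. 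With all $n_k$ resonance conditions now met, the right-hand side of the homological equation lies in the image of $\mathbf P_{\gamma,k}^{[\chi]}$; hence a solution $\mathbf Q_\gamma^{[k]}$ exists, its $k_\gamma^0$ undetermined parameters being fixed arbitrarily (for instance to zero). This closes the induction.

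The only genuine obstacle is the compatibility bookkeeping at each degree, and it is dispatched entirely by the nonsingularity of $\{\mathbf a_{i,l_j}^k\}$ that is built into the definition of a resonant set: this single property simultaneously pins down the resonant coefficients of $\mathcal Y_\gamma^{[k+\chi]}$ and guarantees that the cokernel (solvability) conditions for recovering $\mathcal Q_\gamma^{[k]}$ hold. Because the whole argument is purely formal and proceeds term by term in the generalized degree, no convergence question intervenes, and the constructed $\mathcal Q$ and $\mathcal Y$ are well-defined formal series with $\mathrm{ord}_\gamma\mathcal Q\ge1$ and $\mathrm{ord}_\gamma\mathcal Y\ge\chi+1$, as required.
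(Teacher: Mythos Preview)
The paper does not supply its own proof of this proposition; it is quoted from~\cite{basov03}. Your argument is correct and is precisely the one implicit in the linear-algebraic framework the paper sets up: the $n_k$ vectors $\mathbf a_i^k$ span the cokernel of $\mathbf P_{\gamma,k}^{[\chi]}$ (this is exactly what the construction of the resonance equations encodes, since $n_k=(k+\chi)_\gamma-r_k$ equals the codimension of the image), the defining nonsingularity of $\{\mathbf a_{i,l_j}^k\}$ lets you solve the resonance equations for the $\mathfrak Y$-coefficients after zeroing the rest, and then the Fredholm alternative yields $\mathbf Q_\gamma^{[k]}$. The induction on $k$ is standard and your handling of the base case ($k=1$, where $\widetilde{\mathcal Y}_\gamma^{[1+\chi]}=\mathcal X_\gamma^{[1+\chi]}$ by~\eqref{Eq5}) is implicit but unproblematic.
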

\section{Generalized pseudo-Hamiltonian normal form (GPHNF)}
\subsection{Euler operator}

\begin{definition}\ \
\textit{The Euler operator} with weight $\gamma$ is the VQHP 
$\mathcal E_\gamma(x)=(\gamma_1 x_1,\gamma_2 x_2).$
\end{definition}
Clearly, the Euler operator has zero g.~d., so in the designation 
$\mathcal E_\gamma,$ the g.~d. is omitted.

Let $Q_\gamma^{[k]}$ be a QHP, and ${\mathcal Q}_\gamma^{[k]}= 
\bigl(Q_{\gamma,1}^{[k+\gamma_1]},Q_{\gamma,2}^{[k+\gamma_2]}\bigr)$ be a VQHP 
of g.~d. $k\ge0.$
\begin{proposition}\ \ 
The Euler operator with weight $\gamma$ has the following properties:
$$1)\ \mathcal E_\gamma(Q_\gamma^{[k]})=k Q_\gamma^{[k]};\quad
2)\ [\mathcal E_\gamma,\mathcal Q_\gamma^{[k]}]=k\mathcal Q_\gamma^{[k]};\quad
3)\ \mathrm{div}(Q_\gamma^{[k]}\mathcal E_\gamma)=(k+\delta)Q_\gamma^{[k]}.$$
\end{proposition}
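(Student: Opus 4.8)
The plan is to establish property~1 first, since the other two reduce to it. Property~1 is the quasi-homogeneous analogue of Euler's identity, so I would prove it by reduction to monomials: by linearity of the action $\mathcal E_\gamma(\,.\,)$ it suffices to verify the claim on a single monomial $x_1^{p_1}x_2^{p_2}$ of g.~d. $k=\langle p,\gamma\rangle.$ The definition of the action of a VQHP gives $\mathcal E_\gamma(x_1^{p_1}x_2^{p_2})=\gamma_1 x_1\cdot p_1 x_1^{p_1-1}x_2^{p_2}+\gamma_2 x_2\cdot p_2 x_1^{p_1}x_2^{p_2-1}=(\gamma_1 p_1+\gamma_2 p_2)x_1^{p_1}x_2^{p_2}=\langle p,\gamma\rangle\,x_1^{p_1}x_2^{p_2}=k\,x_1^{p_1}x_2^{p_2}.$ Extending linearly over all monomials of g.~d. $k$ making up the QHP $Q_\gamma^{[k]}$ yields the identity.

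For property~2, I would substitute $\mathcal P=\mathcal E_\gamma$ (a VQHP of g.~d. $0$ with components $\gamma_1 x_1,\ \gamma_2 x_2$) into the Lie-bracket definition. The first component becomes $\mathcal E_\gamma(Q_{\gamma,1}^{[k+\gamma_1]})-\mathcal Q_\gamma^{[k]}(\gamma_1 x_1).$ By property~1 applied to the QHP $Q_{\gamma,1}^{[k+\gamma_1]}$ of g.~d. $k+\gamma_1,$ the first term equals $(k+\gamma_1)Q_{\gamma,1}^{[k+\gamma_1]};$ the second term, since $\partial_1(\gamma_1 x_1)=\gamma_1$ and $\partial_2(\gamma_1 x_1)=0,$ equals $\gamma_1 Q_{\gamma,1}^{[k+\gamma_1]}.$ Their difference is $k\,Q_{\gamma,1}^{[k+\gamma_1]}.$ The second component is handled identically, giving $k\,Q_{\gamma,2}^{[k+\gamma_2]},$ so that $[\mathcal E_\gamma,\mathcal Q_\gamma^{[k]}]=k\,\mathcal Q_\gamma^{[k]}.$

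For property~3, I would expand the divergence of the vector field $Q_\gamma^{[k]}\mathcal E_\gamma=(\gamma_1 x_1 Q_\gamma^{[k]},\,\gamma_2 x_2 Q_\gamma^{[k]})$ via the product rule, obtaining $\mathrm{div}(Q_\gamma^{[k]}\mathcal E_\gamma)=\gamma_1 Q_\gamma^{[k]}+\gamma_1 x_1\partial_1 Q_\gamma^{[k]}+\gamma_2 Q_\gamma^{[k]}+\gamma_2 x_2\partial_2 Q_\gamma^{[k]}.$ The two derivative terms recombine into $\mathcal E_\gamma(Q_\gamma^{[k]})=k\,Q_\gamma^{[k]}$ by property~1, while the remaining terms sum to $(\gamma_1+\gamma_2)Q_\gamma^{[k]}=\delta\,Q_\gamma^{[k]},$ which yields $(k+\delta)Q_\gamma^{[k]}.$

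None of these steps presents a genuine obstacle: the whole proposition is routine once the quasi-homogeneous Euler identity of property~1 is in hand. The only point requiring care is the g.~d. bookkeeping in property~2, where one must apply property~1 at g.~d. $k+\gamma_i$ (not $k$) to the components of $\mathcal Q_\gamma^{[k]},$ and then check that the correction $-\gamma_i$ arising from differentiating the Euler component $\gamma_i x_i$ cancels the excess so that exactly the factor $k$ survives.
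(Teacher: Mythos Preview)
Your proof is correct and follows essentially the same approach as the paper: property~1 is the quasi-homogeneous Euler identity (which the paper simply calls ``obvious''), property~2 is obtained by expanding the Lie bracket and invoking property~1 on each component, and property~3 follows from the Leibniz rule $\mathrm{div}(Q_\gamma^{[k]}\mathcal E_\gamma)=\mathcal E_\gamma(Q_\gamma^{[k]})+Q_\gamma^{[k]}\,\mathrm{div}\,\mathcal E_\gamma$ together with property~1. Your write-up is in fact more detailed than the paper's, particularly in spelling out the g.~d. bookkeeping for property~2.
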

\begin{proof}\ \
1)\ The property is obvious.

2)\ According to 1), $[\mathcal E_\gamma,\mathcal Q_\gamma^{[k]}]=
\bigl({\mathcal E}_{\gamma}(Q_{\gamma,1}^{[k+\gamma_1]})-
{\mathcal Q}_\gamma^{[k]}(\gamma_1 x_1),\
{\mathcal E}_{\gamma}(Q_{\gamma,2}^{[k+\gamma_2]})-
{\mathcal Q}_\gamma^{[k]}(\gamma_2 x_2)\bigr)=k\mathcal Q_\gamma^{[k]}.$

3)\ According to 1) and the Leibniz rule,
$\mathrm{div}(Q_\gamma^{[k]}\mathcal E_\gamma)=\mathcal E_\gamma(Q_\gamma^{[k]})+
Q_\gamma^{[k]} \mathrm{div}\mathcal E_\gamma=(k+\delta)Q_\gamma^{[k]}.$
\end{proof}
\begin{lemma}\label{EulerLm}\ \ 
Each VQHP $\mathcal Q_\gamma^{[k]}= \bigl(Q_{\gamma,1}^{[k+\gamma_1]}, 
Q_{\gamma,2}^{[k+\gamma_2]}\bigr)$ can be uniquely represented in the form
\begin{equation}
\label{Eq6}
\mathcal Q_\gamma^{[k]}=\mathcal I_\gamma^{[k]}+
J_\gamma^{[k]}\mathcal E_\gamma\qquad \bigl(
\mathcal I_\gamma^{[k]}= (-\partial_2 I_\gamma^{[k+\delta]},
\partial_1 I_\gamma^{[k+\delta]})\bigr).
\end{equation}
Herewith, $J_\gamma^{[k]}=\mathrm{div}(\mathcal Q_\gamma^{[k]})/(k+\delta).$
\end{lemma}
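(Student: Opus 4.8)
The plan is to exploit the divergence operator, which annihilates every Hamiltonian field of the form $(-\partial_2 I, \partial_1 I)$ while acting on the Euler part by a nonzero scalar factor. First I would observe that if a decomposition \eqref{Eq6} exists, then applying $\mathrm{div}$ and using that mixed partials commute gives $\mathrm{div}(\mathcal I_\gamma^{[k]})=-\partial_1\partial_2 I_\gamma^{[k+\delta]}+\partial_2\partial_1 I_\gamma^{[k+\delta]}=0$, whereas property 3) of the preceding proposition yields $\mathrm{div}(J_\gamma^{[k]}\mathcal E_\gamma)=(k+\delta)J_\gamma^{[k]}$. Hence $\mathrm{div}(\mathcal Q_\gamma^{[k]})=(k+\delta)J_\gamma^{[k]}$, which forces the stated formula $J_\gamma^{[k]}=\mathrm{div}(\mathcal Q_\gamma^{[k]})/(k+\delta)$ and already establishes the uniqueness of $J_\gamma^{[k]}$, the denominator being nonzero since $\delta=\gamma_1+\gamma_2\ge 2$.

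For existence, I would take this formula as the \emph{definition} of $J_\gamma^{[k]}$. Since $\partial_1 Q_{\gamma,1}^{[k+\gamma_1]}$ and $\partial_2 Q_{\gamma,2}^{[k+\gamma_2]}$ both have g.~d. $k$, the scalar $J_\gamma^{[k]}$ is a QHP of g.~d. $k$, so $J_\gamma^{[k]}\mathcal E_\gamma$ is a genuine VQHP of g.~d. $k$, and the remainder $\mathcal I_\gamma^{[k]}:=\mathcal Q_\gamma^{[k]}-J_\gamma^{[k]}\mathcal E_\gamma=:(A,B)$ satisfies $\mathrm{div}(\mathcal I_\gamma^{[k]})=\mathrm{div}(\mathcal Q_\gamma^{[k]})-(k+\delta)J_\gamma^{[k]}=0$ by construction.

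The remaining --- and main --- step is to show that this divergence-free VQHP is Hamiltonian, i.~e. of the form $(-\partial_2 I,\partial_1 I)$. Here I would invoke the planar Poincar\'e lemma: the polynomial $1$-form $\omega=B\,dx_1-A\,dx_2$ is closed, since $d\omega=-(\partial_1 A+\partial_2 B)\,dx_1\wedge dx_2=0$, hence exact on $\mathbb R^2$, so $\omega=dI$ for some polynomial $I$ with $\partial_1 I=B$ and $\partial_2 I=-A$; this is precisely $\mathcal I_\gamma^{[k]}=(-\partial_2 I,\partial_1 I)$. Because $A$ and $B$ are QHPs of g.~d. $k+\gamma_1$ and $k+\gamma_2$, matching the g.~d. of $\partial_1 I$ and $\partial_2 I$ forces $I$ to be a QHP of g.~d. $k+\delta$ (all other homogeneous components of $I$ must vanish), which I would denote $I_\gamma^{[k+\delta]}$; its additive-constant ambiguity disappears because $k+\delta>0$. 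The one point requiring care is confirming that exactness can be realized inside the polynomial ring and respects the quasi-homogeneous grading: the explicit homotopy formula for the Poincar\'e lemma produces a polynomial potential, and the grading is preserved because each $\partial_i$ shifts g.~d. by $-\gamma_i$. Combined with the uniqueness of $J_\gamma^{[k]}$ already derived, this completes the proof.
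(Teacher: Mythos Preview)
Your proof is correct and follows essentially the same approach as the paper: determine $J_\gamma^{[k]}$ by taking the divergence of \eqref{Eq6} and invoking property~3) of $\mathcal E_\gamma$, then recover $I_\gamma^{[k+\delta]}$ from the divergence-free remainder via the Poincar\'e lemma, noting quasi-homogeneity for uniqueness. You supply a few more details (nonvanishing of $k+\delta$, polynomiality of the potential, the grading check), but the structure is identical.
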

\begin{proof}\ \
Take the divergence of both sides of equality~\eqref{Eq6}. According to
property~3) of $\mathcal E_\gamma$ and the fact that 
$\mathrm{div}(\mathcal I_\gamma^{[k]})\equiv0,$ we obtain
$J_\gamma^{[k]}=\mathrm{div}(\mathcal Q_\gamma^{[k]})/(k+\delta).$ Hence, 
$I_\gamma^{[k+\delta]}$ must satisfy the system of equations 
$\partial_1 I_\gamma^{[k+\delta]}=Q_{\gamma,2}^{[k+\gamma_2]}-\gamma_2 x_2 
\mathrm{div}(\mathcal Q_\gamma^{[k]})/(k+\delta),$ 
$\partial_2 I_\gamma^{[k+\delta]}=\gamma_1 x_1 \mathrm{div}(
\mathcal Q_\gamma^{[k]})/(k+\delta)-Q_{\gamma,1}^{[k+\gamma_1]}.$

By the Poincare lemma, taking into account the quasi-homogeneity,
$I_\gamma^{[k+\delta]}$ exists and is unique.
\end{proof}
\subsection{Hamiltonian resonant sets}
Let $H_\gamma^{[\chi+\delta]}\not\equiv0$ be a QHP, and
$\mathcal H_\gamma^{[\chi]}=
(-\partial_2 H_\gamma^{[\chi+\delta]},\partial_1 H_\gamma^{[\chi+\delta]})$ 
be a VQHP of g.~d. $\chi\ge0.$

Denote the space of polynomials in $x$ by $\mathfrak{P},$ and
introduce the inner-product on $\mathfrak{P}:$
$$\langle\langle P,Q\rangle\rangle=P(D)\overline{Q}(x)|_{x=0}\qquad
\bigl(P,Q\in\mathfrak{P},\ D=(\partial_1,\partial_2)\bigr)$$
where the upper bar stands for the complex conjugation of the coefficients.
Then the operator $\left.\mathcal H_\gamma^{[\chi]}\right.^*: 
\mathfrak{P}\rightarrow\mathfrak{P}$ conjugate to 
$\mathcal H_\gamma^{[\chi]}$ with respect to 
$\langle\langle\,.\,,\,.\,\rangle\rangle$ has the form 
(see~\cite{BV})
\begin{equation}
\label{Eq7}
\left.\mathcal H_\gamma^{[\chi]}\right.^*=
x_2\cdot{\partial_1 \overline{H}_\gamma^{[\chi+\delta]}}(D)-
x_1\cdot{\partial_2 \overline{H}_\gamma^{[\chi+\delta]}}(D)=
{\partial_1 \overline{H}_\gamma^{[\chi+\delta]}}(D)\cdot x_2-
{\partial_2 \overline{H}_\gamma^{[\chi+\delta]}}(D)\cdot x_1.
\end{equation}
\begin{definition}
\label{REdef}\ \
Kernel $\mathfrak{R}_\gamma=
\mathrm{Ker}\left.\mathcal H_\gamma^{[\chi]}\right.^*$ 
of the operator~\eqref{Eq7} is called the space of \textit{resonant 
polynomials}. Denote the linear space of resonant QHPs of g.~d. $k\ge0$ by 
$\mathfrak{R}_\gamma^{[k]}.$
\end{definition}

Let $\{R_{\gamma,i}^{[k]}\}_{i=1}^{s_k}$ be a basis for 
$\mathfrak{R}_\gamma^{[k]},$ and 
$\{\widetilde R_{\gamma,i}^{[k]}\}_{i=1}^{\widetilde s_k}$ be a basis for
$\widetilde{\mathfrak{R}}_\gamma^{[k]}=\mathfrak{R}_\gamma^{[k]}\cap
\left.\mathcal H_\gamma^{[\chi]}\right.^*\!(\mathfrak P_\gamma^{[k+\chi]}).$
\begin{definition}\label{Nabor}\ \
Sets of QHPs $\mathfrak{S}_\gamma^{[k]}=\{S_{\gamma,j}^{[k]}\}_{j=1}^{s_k}$ and
$\widetilde{\mathfrak{S}}_\gamma^{[k]}=
\{\widetilde S_{\gamma,j}^{[k]}\}_{j=1}^{\widetilde s_k}$
are called, respectively, \textit{a Hamiltonian resonant set} and 
\textit{a Hamiltonian reduced resonant set} in g.~d. $k,$
if $\det(\{\langle\langle 
R_{\gamma,i}^{[k]},S_{\gamma,j}^{[k]}\rangle\rangle\}_{i,j=1}^{s_k})\ne0$ and 
$\det(\{\langle\langle\widetilde R_{\gamma,i}^{[k]},
\widetilde S_{\gamma,j}^{[k]}\rangle\rangle\}_{i,j=1}^{\widetilde s_k})\ne0.$
The sets $\mathfrak{S}_\gamma=\cup_{k\ge0}\mathfrak{S}_\gamma^{[k]}$ and 
$\widetilde{\mathfrak{S}}_\gamma=\cup_{k\ge0}
\widetilde{\mathfrak{S}}_\gamma^{[k]}$ are called, respectively, a Hamiltonian 
resonant set and a Hamiltonian reduced resonant set.
\end{definition}

Show the independence of the definition of a Hamiltonian resonant set of the 
choice of basis.

Since any other basis $\{{R'}_{\gamma,i}^{[k]}\}_{i=1}^{s_k}$ 
for the space $\mathfrak{R}_\gamma^{[k]}$ is obtained from 
$\{R_{\gamma,i}^{[k]}\}_{i=1}^{s_k}$ by multiplication by a nonsingular matrix:
${R'}_{\gamma,i}^{[k]}=\sum_{j=1}^{s_k}A_{ij} R_{\gamma,j}^{[k]},$ 
$\det{A}\ne0$ $(i=\overline{1,s_k}),$ for each Hamiltonian resonant set 
$\mathfrak{S}_\gamma^{[k]}=\{S_{\gamma,j}^{[k]}\}_{j=1}^{s_k}$ in g.~d. $k,$ 
we have $\det(\{\langle\langle 
{R'}_{\gamma,i}^{[k]},S_{\gamma,j}^{[k]}\rangle\rangle\}_{i,j=1}^{s_k})= 
\det(\{\sum_{l=1}^{s_k}A_{il}\langle\langle R_{\gamma,l}^{[k]}, 
S_{\gamma,j}^{[k]}\rangle\rangle\}_{i,j=1}^{s_k})= \det{A}\,
\det(\{\langle\langle R_{\gamma,i}^{[k]}, 
S_{\gamma,j}^{[k]}\rangle\rangle\}_{i,j=1}^{s_k})\ne0.$

Similarly, the definition of a Hamiltonian reduced resonant set 
$\widetilde{\mathfrak{S}}_\gamma^{[k]}$ is also independent of the choice of 
basis.
\begin{definition}\label{MinNabor}\ \
We say that a Hamiltonian resonant set ${\mathfrak{S}}_\gamma$ or a Hamiltonian reduced resonant set $\widetilde{\mathfrak{S}}_\gamma$ is  
\textit{minimal} if it consists of monomials.
\end{definition}
\subsection{Definition of the GPHNF and existence theorem for the normalizing transformation}
Let the unperturbed part of system~\eqref{Eq1} have the form 
$\mathcal P_{\gamma}^{[\chi]}= \mathcal H_\gamma^{[\chi]}$ where, as in the 
previous subsection, $\mathcal H_\gamma^{[\chi]}=
(-\partial_2 H_\gamma^{[\chi+\delta]}, \partial_1 H_\gamma^{[\chi+\delta]})$ 
is a VQHP of g.~d. $\chi\ge0.$

According to Lemma~\ref{EulerLm} system~\eqref{Eq1} can be uniquely 
represented in the form
\begin{equation}
\label{Eq8}
\dot x=\mathcal H_\gamma^{[\chi]}(x)+\sum_{k=1}^\infty \big(
\mathcal F_\gamma^{[k+\chi]}(x)+G_\gamma^{[k+\chi]}(x)
\mathcal E_\gamma(x)\big)
\end{equation}
where $\mathcal F_\gamma^{[k+\chi]}=
\big(-\partial_2 F_\gamma^{[k+\chi+\delta]},
\partial_1 F_\gamma^{[k+\chi+\delta]}\big).$ Herewith,
$G_\gamma^{[k+\chi]}=\mathrm{div}\big(
\mathcal X_\gamma^{[k+\chi]}\big)/(k+\chi+\delta).$
\begin{definition}[GPHNF]\ \
System~\eqref{Eq8} is called \textit{a generalized pseudo-Hamiltonian normal 
form}, if for some Hamiltonian resonant set $\mathfrak S_\gamma$ and 
Hamiltonian reduced resonant set $\widetilde{\mathfrak S}_\gamma,$ the 
following conditions are satisfied:
$$\forall\,k\ge1\qquad F_\gamma^{[k+\chi+\delta]}\in Lin(
\widetilde{\mathfrak S}_\gamma^{[k+\chi+\delta]}),\quad
G_\gamma^{[k+\chi]}\in Lin(\mathfrak S_\gamma^{[k+\chi]}).$$
\end{definition}

Thereby, the structure of each GPHNF is generated by certain Hamiltonian 
resonant and reduced resonant sets $\mathfrak S_\gamma$ and 
$\widetilde{\mathfrak S}_\gamma.$
\begin{lemma}\ \ 
1) Let $J_\gamma^{[k]}$ be a QHP of g.~d. $k\ge0.$ Then for some QHP 
$K_\gamma^{[k+\chi+\delta]},$
\begin{equation}
\label{Eq9}
[\mathcal H_\gamma^{[\chi]},J_\gamma^{[k]}\mathcal E_\gamma]=
\mathcal K_\gamma^{[k+\chi]}+\frac{k+\delta}{k+\chi+\delta}
\mathcal H_\gamma^{[\chi]}(J_\gamma^{[k]})\mathcal E_\gamma\qquad
\bigl(\mathcal K_\gamma^{[k+\chi]}=(-\partial_2 K_\gamma^{[k+\chi+\delta]},
\partial_1 K_\gamma^{[k+\chi+\delta]})\bigr).
\end{equation}
Herewith, if $\mathcal H_\gamma^{[\chi]}(J_\gamma^{[k]})=0,$ then
$\mathcal K_\gamma^{[k+\chi]}=-\chi J_\gamma^{[k]}\mathcal H_\gamma^{[\chi]}$
and $\mathcal H_\gamma^{[\chi]}(K_\gamma^{[k+\chi+\delta]})=0.$

2) Conversely, given any QHP $K_\gamma^{[k+\chi+\delta]}$ such that
$\mathcal H_\gamma^{[\chi]}(K_\gamma^{[k+\chi+\delta]})=0,$ then there exists 
unique QHP $J_\gamma^{[k]}$ such that 
$\mathcal H_\gamma^{[\chi]}(J_\gamma^{[k]})=0$ and equality~\eqref{Eq9} is 
satisfied.
\end{lemma}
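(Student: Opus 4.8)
The plan is to reduce part~1) to a transparent closed form for the bracket and then invoke the canonical splitting of Lemma~\ref{EulerLm}. First I would note that the bracket defined in the Remark obeys the Leibniz rule
\[
[\mathcal H_\gamma^{[\chi]}, J_\gamma^{[k]}\mathcal E_\gamma]
=\mathcal H_\gamma^{[\chi]}(J_\gamma^{[k]})\,\mathcal E_\gamma
+J_\gamma^{[k]}\,[\mathcal H_\gamma^{[\chi]},\mathcal E_\gamma],
\]
which is immediate from the component formula for $[\,.\,,\,.\,]$ after expanding $\mathcal H_\gamma^{[\chi]}(J_\gamma^{[k]}\gamma_i x_i)$ by the product rule. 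By the second property of $\mathcal E_\gamma$ and the antisymmetry of the bracket, $[\mathcal H_\gamma^{[\chi]},\mathcal E_\gamma]=-[\mathcal E_\gamma,\mathcal H_\gamma^{[\chi]}]=-\chi\,\mathcal H_\gamma^{[\chi]}$, so the bracket equals $\mathcal H_\gamma^{[\chi]}(J_\gamma^{[k]})\,\mathcal E_\gamma-\chi\,J_\gamma^{[k]}\,\mathcal H_\gamma^{[\chi]}$. This is a VQHP of g.~d.\ $k+\chi$, so Lemma~\ref{EulerLm} represents it uniquely as $\mathcal K_\gamma^{[k+\chi]}+\widetilde J_\gamma^{[k+\chi]}\mathcal E_\gamma$.

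To identify the Euler coefficient I would compute $\widetilde J_\gamma^{[k+\chi]}=\mathrm{div}([\mathcal H_\gamma^{[\chi]}, J_\gamma^{[k]}\mathcal E_\gamma])/(k+\chi+\delta)$: apply the third property of $\mathcal E_\gamma$ to the summand $\mathcal H_\gamma^{[\chi]}(J_\gamma^{[k]})\mathcal E_\gamma$ (whose scalar factor has g.~d.\ $k+\chi$), and the Leibniz rule for the divergence together with $\mathrm{div}\,\mathcal H_\gamma^{[\chi]}\equiv0$ (as $\mathcal H_\gamma^{[\chi]}$ is Hamiltonian) to the summand $-\chi J_\gamma^{[k]}\mathcal H_\gamma^{[\chi]}$. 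This yields $(k+\chi+\delta)\mathcal H_\gamma^{[\chi]}(J_\gamma^{[k]})-\chi\,\mathcal H_\gamma^{[\chi]}(J_\gamma^{[k]})=(k+\delta)\mathcal H_\gamma^{[\chi]}(J_\gamma^{[k]})$, hence $\widetilde J_\gamma^{[k+\chi]}=\frac{k+\delta}{k+\chi+\delta}\mathcal H_\gamma^{[\chi]}(J_\gamma^{[k]})$, exactly the coefficient in~\eqref{Eq9}; a one-line divergence check confirms the remainder $\mathcal K_\gamma^{[k+\chi]}$ is Hamiltonian. Setting $\mathcal H_\gamma^{[\chi]}(J_\gamma^{[k]})=0$ kills the Euler term and leaves $\mathcal K_\gamma^{[k+\chi]}=-\chi J_\gamma^{[k]}\mathcal H_\gamma^{[\chi]}$. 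Finally, writing out $\mathcal H_\gamma^{[\chi]}(f)=-\partial_2 H_\gamma^{[\chi+\delta]}\partial_1 f+\partial_1 H_\gamma^{[\chi+\delta]}\partial_2 f$ shows this action is antisymmetric in its two Hamiltonian arguments, so $\mathcal H_\gamma^{[\chi]}(K_\gamma^{[k+\chi+\delta]})=-\mathcal K_\gamma^{[k+\chi]}(H_\gamma^{[\chi+\delta]})=\chi J_\gamma^{[k]}\,\mathcal H_\gamma^{[\chi]}(H_\gamma^{[\chi+\delta]})=0$, since $\mathcal H_\gamma^{[\chi]}(H_\gamma^{[\chi+\delta]})=0$ by the same antisymmetry.

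For part~2) I take $\chi\ge1$ (for $\chi=0$ the normalizing constant below vanishes and the statement degenerates). Under $\mathcal H_\gamma^{[\chi]}(J_\gamma^{[k]})=0$, equality~\eqref{Eq9} is equivalent to $\mathcal K_\gamma^{[k+\chi]}=-\chi J_\gamma^{[k]}\mathcal H_\gamma^{[\chi]}$, i.e.\ $\partial_i K_\gamma^{[k+\chi+\delta]}=-\chi J_\gamma^{[k]}\partial_i H_\gamma^{[\chi+\delta]}$ for $i=1,2$; contracting with $\mathcal E_\gamma$ and using Euler's identity (the first property of $\mathcal E_\gamma$) converts this into the scalar relation $K_\gamma^{[k+\chi+\delta]}=-\frac{\chi(\chi+\delta)}{k+\chi+\delta}\,J_\gamma^{[k]}H_\gamma^{[\chi+\delta]}$. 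Uniqueness is then immediate: two solutions give the same $\mathcal K_\gamma^{[k+\chi]}$, so their difference $D$ satisfies $D\,\mathcal H_\gamma^{[\chi]}\equiv0$ and hence $D\equiv0$ because $\mathcal H_\gamma^{[\chi]}\not\equiv0$. For existence the natural candidate is $J_\gamma^{[k]}:=-\frac{k+\chi+\delta}{\chi(\chi+\delta)}\,K_\gamma^{[k+\chi+\delta]}/H_\gamma^{[\chi+\delta]}$; once this is a genuine QHP it lies in the kernel automatically, since $\mathcal H_\gamma^{[\chi]}$ is a derivation and both $K_\gamma^{[k+\chi+\delta]}$ and $H_\gamma^{[\chi+\delta]}$ are annihilated by it.

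The main obstacle is precisely the well-definedness of this quotient, i.e.\ the divisibility $H_\gamma^{[\chi+\delta]}\mid K_\gamma^{[k+\chi+\delta]}$ in the polynomial ring for every resonant $K_\gamma^{[k+\chi+\delta]}$; equivalently, that the injective map $J_\gamma^{[k]}\mapsto K_\gamma^{[k+\chi+\delta]}$ from part~1) between the spaces of resonant QHPs of g.~d.\ $k$ and $k+\chi+\delta$ is onto, which holds as soon as these spaces have equal dimension. I would settle this from the structure of the resonant polynomials of a planar quasi-homogeneous Hamiltonian: the algebra $\mathfrak R_\gamma$ has transcendence degree one and is generated by a single primitive QHP $H_0$, with $H_\gamma^{[\chi+\delta]}$ a scalar multiple of a power $H_0^{e}$. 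By quasi-homogeneity a resonant $K_\gamma^{[k+\chi+\delta]}$ is then a scalar multiple of a single power $H_0^{m}$, and matching generalized degrees gives $m\,\mathrm{ord}_\gamma H_0=k+\chi+\delta\ge\chi+\delta=e\,\mathrm{ord}_\gamma H_0$, so $m\ge e$ and $H_0^{e}\mid H_0^{m}$, i.e.\ $H_\gamma^{[\chi+\delta]}\mid K_\gamma^{[k+\chi+\delta]}$. This structural fact about first integrals is the one nontrivial ingredient; everything else is bookkeeping with the Euler operator.
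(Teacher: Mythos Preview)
Your argument is correct and follows essentially the same route as the paper. Part~1) is identical in spirit: the paper also expands via the Leibniz rule to get $\mathcal H_\gamma^{[\chi]}(J_\gamma^{[k]})\,\mathcal E_\gamma-\chi J_\gamma^{[k]}\mathcal H_\gamma^{[\chi]}$, then applies Lemma~\ref{EulerLm} together with the identity $\mathrm{div}(J_\gamma^{[k]}\mathcal H_\gamma^{[\chi]})\equiv\mathcal H_\gamma^{[\chi]}(J_\gamma^{[k]})$ to extract the coefficient $(k+\delta)/(k+\chi+\delta)$; for the last claim the paper uses $[\mathcal H_\gamma^{[\chi]},J_\gamma^{[k]}\mathcal H_\gamma^{[\chi]}]=0$ rather than your Poisson-antisymmetry phrasing, but these are equivalent. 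For Part~2) the paper gives only the one-line justification ``any two integrals of $\mathcal H_\gamma^{[\chi]}$ are functionally dependent''; your explicit reconstruction $J_\gamma^{[k]}=-\tfrac{k+\chi+\delta}{\chi(\chi+\delta)}\,K_\gamma^{[k+\chi+\delta]}/H_\gamma^{[\chi+\delta]}$ and the divisibility argument via a primitive generator $H_0$ are exactly what that sentence unpacks to in the quasi-homogeneous setting, and your observation that $\chi\ge1$ is needed for the statement as written is accurate.

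One terminological correction: in the paper $\mathfrak R_\gamma$ denotes the kernel of the \emph{adjoint} operator $\left.\mathcal H_\gamma^{[\chi]}\right.^{*}$ (the resonant polynomials of Definition~\ref{REdef}), which is generally neither a subalgebra nor singly generated. What your Part~2) argument actually uses is the ring $\mathrm{Ker}\,\mathcal H_\gamma^{[\chi]}$ of polynomial first integrals; that is the object with transcendence degree one and a primitive generator $H_0$, so you should rename it accordingly.
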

\begin{proof}\ \ 
1) By Lemma~\ref{EulerLm} and the identity $\mathrm{div}(J_\gamma^{[k]}
\mathcal H_\gamma^{[\chi]})\equiv\mathcal H_\gamma^{[\chi]}(J_\gamma^{[k]}),$ 
it follows that
\begin{multline}\notag
[\mathcal H_\gamma^{[\chi]},J_\gamma^{[k]}\mathcal E_\gamma]=
\mathcal H_\gamma^{[\chi]}(J_\gamma^{[k]})\mathcal E_\gamma+
J_\gamma^{[k]}[\mathcal H_\gamma^{[\chi]},\mathcal E_\gamma]=
\mathcal H_\gamma^{[\chi]}(J_\gamma^{[k]})\mathcal E_\gamma-
\chi J_\gamma^{[k]}\mathcal H_\gamma^{[\chi]}=\\
=\Bigl(1-\frac{\chi}{k+\chi+\delta}\Bigr)
\mathcal H_\gamma^{[\chi]}(J_\gamma^{[k]})\mathcal E_\gamma+
\mathcal K_\gamma^{[k+\chi]}= \frac{k+\delta}{k+\chi+\delta}
\mathcal H_\gamma^{[\chi]}(J_\gamma^{[k]})\mathcal E_\gamma+
\mathcal K_\gamma^{[k+\chi]}.
\end{multline}
In particular, if $\mathcal H_\gamma^{[\chi]}(J_\gamma^{[k]})=0,$ then
$\mathcal K_\gamma^{[k+\chi]}=-\chi J_\gamma^{[k]}\mathcal H_\gamma^{[\chi]}.$
Herewith, from the chain of equalities 
$[\mathcal H_\gamma^{[\chi]},J_\gamma^{[k]}\mathcal H_\gamma^{[\chi]}]=
\mathcal H_\gamma^{[\chi]}(J_\gamma^{[k]})\mathcal H_\gamma^{[\chi]}+
J_\gamma^{[k]}[\mathcal H_\gamma^{[\chi]},\mathcal H_\gamma^{[\chi]}]=0,$
it follows that $\mathcal H_\gamma^{[\chi]}(K_\gamma^{[k+\chi+\delta]})=0.$

2) The converse follows from the fact that any two integrals of 
$\mathcal H_\gamma^{[\chi]}$ are functionally dependent.
\end{proof}
\begin{theorem}\ \ Given any system~\eqref{Eq8} and arbitrary Hamiltonian 
resonant set $\mathfrak S_\gamma$ and Hamiltonian reduced resonant set 
$\widetilde{\mathfrak S}_\gamma,$ then there exists a near-identity formal 
transformation~\eqref{Eq2} that brings it to a certain GPHNF generated by 
$\mathfrak S_\gamma$ and $\widetilde{\mathfrak S}_\gamma.$
\end{theorem}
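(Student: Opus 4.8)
The plan is to construct the normalizing transformation~\eqref{Eq2} recursively in the generalized order, reducing the theorem to a single inductive step handled by the homological equation~\eqref{Eq4}. Suppose the coefficients of $\mathcal Y$ in all g.~d. below $m+\chi$ have already been put into GPHNF form; by~\eqref{Eq4} the g.~d. $m+\chi$ part obeys $[\mathcal H_\gamma^{[\chi]},\mathcal Q_\gamma^{[m]}]=\widetilde{\mathcal Y}_\gamma^{[m+\chi]}-\mathcal Y_\gamma^{[m+\chi]}$, where $\widetilde{\mathcal Y}_\gamma^{[m+\chi]}$ is already determined by the lower orders. Decomposing the known $\widetilde{\mathcal Y}_\gamma^{[m+\chi]}$ by Lemma~\ref{EulerLm} into a Hamiltonian part with Hamiltonian $F_\gamma^{[m+\chi+\delta]}$ and a trace coefficient $G_\gamma^{[m+\chi]}$, and writing the sought $\mathcal Y_\gamma^{[m+\chi]}=\widehat{\mathcal F}_\gamma^{[m+\chi]}+\widehat G_\gamma^{[m+\chi]}\mathcal E_\gamma$ likewise, the task becomes to find $\mathcal Q_\gamma^{[m]}$ together with $\widehat F_\gamma^{[m+\chi+\delta]}\in Lin(\widetilde{\mathfrak S}_\gamma^{[m+\chi+\delta]})$ and $\widehat G_\gamma^{[m+\chi]}\in Lin(\mathfrak S_\gamma^{[m+\chi]})$ solving this equation.

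First I would decompose $\mathcal Q_\gamma^{[m]}=\mathcal I_\gamma^{[m]}+J_\gamma^{[m]}\mathcal E_\gamma$ by Lemma~\ref{EulerLm} and expand the bracket. Since $[\mathcal H_\gamma^{[\chi]},\mathcal I_\gamma^{[m]}]$ is divergence-free with Hamiltonian $\mathcal H_\gamma^{[\chi]}(I_\gamma^{[m+\delta]})$ (the Poisson bracket of $H$ and $I$), while the Euler part of $[\mathcal H_\gamma^{[\chi]},J_\gamma^{[m]}\mathcal E_\gamma]$ is supplied by~\eqref{Eq9}, the uniqueness in Lemma~\ref{EulerLm} splits the homological equation into a scalar \emph{trace} equation and a scalar \emph{Hamiltonian} equation:
\begin{gather*}
\frac{m+\delta}{m+\chi+\delta}\,\mathcal H_\gamma^{[\chi]}(J_\gamma^{[m]})=G_\gamma^{[m+\chi]}-\widehat G_\gamma^{[m+\chi]},\\
\mathcal H_\gamma^{[\chi]}(I_\gamma^{[m+\delta]})+K_\gamma^{[m+\chi+\delta]}=F_\gamma^{[m+\chi+\delta]}-\widehat F_\gamma^{[m+\chi+\delta]},
\end{gather*}
in which $K_\gamma^{[m+\chi+\delta]}$ is linked to $J_\gamma^{[m]}$ through~\eqref{Eq9}. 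The trace equation I would solve first: the adjoint identity~\eqref{Eq7} gives $\mathrm{Im}\,\mathcal H_\gamma^{[\chi]}=(\mathfrak R_\gamma^{[m+\chi]})^{\perp}$ relative to $\langle\langle\,.\,,\,.\,\rangle\rangle$, and the non-degeneracy in Definition~\ref{Nabor} makes $Lin(\mathfrak S_\gamma^{[m+\chi]})$ a complement of $\mathrm{Im}\,\mathcal H_\gamma^{[\chi]}$; hence $\widehat G_\gamma^{[m+\chi]}$ is fixed uniquely and $J_\gamma^{[m]}$ is recovered up to an arbitrary integral $J_0\in\mathrm{Ker}\,\mathcal H_\gamma^{[\chi]}\cap\mathfrak P_\gamma^{[m]}$, whose addition leaves the trace equation untouched.

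The Hamiltonian equation I would then solve using three sources of freedom: $I_\gamma^{[m+\delta]}$, the integral $J_0$, and $\widehat F_\gamma^{[m+\chi+\delta]}$. By~\eqref{Eq9} the integral part $J_0$ contributes to $K_\gamma^{[m+\chi+\delta]}$ a term $K_0$ that is again an integral, and by part~2 of the lemma containing~\eqref{Eq9} the correspondence $J_0\mapsto K_0$ is a bijection onto $\mathrm{Ker}\,\mathcal H_\gamma^{[\chi]}\cap\mathfrak P_\gamma^{[m+\chi+\delta]}$. Passing to the quotient modulo $\mathrm{Im}\,\mathcal H_\gamma^{[\chi]}$, i.e. projecting orthogonally onto the cokernel $\mathfrak R_\gamma^{[m+\chi+\delta]}$, the equation reduces to a linear system for $\widehat F$ and $K_0$. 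The scheme closes on the observation that $\mathrm{proj}_{\mathfrak R_\gamma^{[m+\chi+\delta]}}(\mathrm{Ker}\,\mathcal H_\gamma^{[\chi]})$ equals the orthogonal complement, inside $\mathfrak R_\gamma^{[m+\chi+\delta]}$, of $\widetilde{\mathfrak R}_\gamma^{[m+\chi+\delta]}=\mathfrak R_\gamma^{[m+\chi+\delta]}\cap\left.\mathcal H_\gamma^{[\chi]}\right.^{*}(\mathfrak P_\gamma^{[m+2\chi+\delta]})$: a resonant $v$ is orthogonal to that projection iff $v\perp\mathrm{Ker}\,\mathcal H_\gamma^{[\chi]}$, i.e. iff $v\in\mathrm{Im}\,\left.\mathcal H_\gamma^{[\chi]}\right.^{*}$. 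Thus the integrals $K_0$ reach exactly the non-reduced resonant directions, whereas the reduced resonant set covers $\widetilde{\mathfrak R}_\gamma^{[m+\chi+\delta]}$ by its defining non-degeneracy; together they span $\mathfrak R_\gamma^{[m+\chi+\delta]}$, so $\widehat F\in Lin(\widetilde{\mathfrak S}_\gamma^{[m+\chi+\delta]})$, $J_0$ and $I_\gamma^{[m+\delta]}$ can be chosen to satisfy the equation, completing the induction.

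The hard part is this last step: proving that the Euler/integral mechanism and the reduced resonant set fit together with neither gap nor overlap. It rests on two ingredients that must be combined carefully — the bijection $J_0\leftrightarrow K_0$ from part~2 of the lemma containing~\eqref{Eq9}, which guarantees that the integrals $K_0$ exhaust $\mathrm{Ker}\,\mathcal H_\gamma^{[\chi]}$, and the fact that the definition of $\widetilde{\mathfrak R}_\gamma$ as $\mathfrak R_\gamma\cap\mathrm{Im}\,\left.\mathcal H_\gamma^{[\chi]}\right.^{*}$ is exactly the orthogonal complement (within $\mathfrak R_\gamma$) of what those integrals can produce. Degenerate Hamiltonians, for which the polynomial integrals are not generated by $H$ alone, are the place where part~2 of~\eqref{Eq9}, and hence the surjectivity of $J_0\mapsto K_0$, would require the closest scrutiny.
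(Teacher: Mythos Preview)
Your proposal is correct and follows essentially the same route as the paper: decompose $\mathcal Q_\gamma^{[m]}$ via Lemma~\ref{EulerLm}, use~\eqref{Eq9} to split the homological equation into the pair~\eqref{Eq12}, solve the trace equation first by the Fredholm alternative (using the non-degeneracy of $\mathfrak S_\gamma$) to obtain $J_\gamma^{[m]}$ up to an integral, and then solve the Hamiltonian equation using that integral freedom together with $\widetilde{\mathfrak S}_\gamma$ and part~2 of Lemma~2. The only difference is presentational: the paper constructs $\widetilde G,\widetilde F$ explicitly by inverting the Gram matrices $A,\widetilde A$ and then invokes Fredholm, whereas you argue abstractly via the identity $(\mathrm{Im}\,\mathcal H_\gamma^{[\chi]}+\mathrm{Ker}\,\mathcal H_\gamma^{[\chi]})^{\perp}=\widetilde{\mathfrak R}_\gamma^{[m+\chi+\delta]}$; your closing caveat about degenerate Hamiltonians is exactly the content of the paper's one-line justification of Lemma~2\,(2) (``any two integrals of $\mathcal H_\gamma^{[\chi]}$ are functionally dependent'').
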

\begin{proof}\ \ 
Let $\mathfrak S_\gamma,\ \widetilde{\mathfrak S}_\gamma$ be Hamiltonian 
resonant and reduced resonant sets for $\mathcal H_\gamma^{[\chi]},$ and let
transformation~\eqref{Eq2} with $\mathrm{ord}_\gamma \mathcal Q=m\ge1$ 
bring system~\eqref{Eq8} into system~\eqref{Eq3} of the form
\begin{equation}
\label{Eq10}
\dot y=\mathcal H_\gamma^{[\chi]}(y)+\sum_{k=1}^\infty
\big(\widetilde{\mathcal F}_\gamma^{[k+\chi]}(y)+
\widetilde G_\gamma^{[k+\chi]}(y)\mathcal E_\gamma(y)\big),
\end{equation}
where $\widetilde{\mathcal F}_\gamma^{[k+\chi]}=
(-\partial_2 \widetilde F_\gamma^{[k+\chi+\delta]},
\partial_1 \widetilde F_\gamma^{[k+\chi+\delta]})$ and 
$\widetilde G_\gamma^{[k+\chi]}=
\mathrm{div}(\mathcal Y_\gamma^{[k+\chi]})/(k+\chi+\delta).$

By formulas~\eqref{Eq6} and~\eqref{Eq9}, we get the following expression for 
the Lie bracket of the VQHPs $\mathcal H_{\gamma}^{[\chi]}$ and 
$\mathcal Q_{\gamma}^{[m]}:$
\begin{equation}
\label{Eq11}
[\mathcal H_{\gamma}^{[\chi]},\mathcal Q_{\gamma}^{[m]}]=
[\mathcal H_{\gamma}^{[\chi]},\mathcal I_\gamma^{[m]}+
J_\gamma^{[m]}\mathcal E_\gamma]=
\bigl([\mathcal H_{\gamma}^{[\chi]},\mathcal I_\gamma^{[m]}]+
\mathcal K_\gamma^{[m+\chi]}\bigr)+
\frac{m+\delta}{m+\chi+\delta}
\mathcal H_\gamma^{[\chi]}(J_\gamma^{[m]})\mathcal E_\gamma.
\end{equation}

Substituting into~\eqref{Eq5} $\mathcal P_{\gamma}^{[\chi]}=
\mathcal H_\gamma^{[\chi]},$ $\mathcal X_{\gamma}^{[m+\chi]}=
\mathcal F_\gamma^{[m+\chi]}+G_\gamma^{[m+\chi]}\mathcal E_\gamma,$ and
$\mathcal Y_{\gamma}^{[m+\chi]}=\widetilde{\mathcal F}_\gamma^{[m+\chi]}+
\widetilde G_\gamma^{[m+\chi]}\mathcal E_\gamma$ and using 
equality~\eqref{Eq11}, we get the system of equations
\begin{equation}
\label{Eq12}
\mathcal H_{\gamma}^{[\chi]}(I_\gamma^{[m+\delta]})+K_\gamma^{[m+\chi+\delta]}=
{F}_\gamma^{[m+\chi+\delta]}-\widetilde{F}_\gamma^{[m+\chi+\delta]},\quad
\frac{m+\delta}{m+\chi+\delta}\mathcal H_\gamma^{[\chi]}(J_\gamma^{[m]})=
G_\gamma^{[m+\chi]}-\widetilde G_\gamma^{[m+\chi]}.
\end{equation}

Denote ${\mathfrak{S}}_\gamma^{[m+\chi]}=
\{S_{\gamma,j}^{[m+\chi]}\}_{j=1}^{s_{m+\chi}},$
$\widetilde{\mathfrak{S}}_\gamma^{[m+\chi+\delta]}=
\{\widetilde S_{\gamma,j}^{[m+\chi+\delta]}\}_{j=1}^{%
\widetilde s_{m+\chi+\delta}}.$

In the linear spaces $\mathfrak{R}_\gamma^{[m+\chi]}$ and
$\widetilde{\mathfrak{R}}_\gamma^{[m+\chi+\delta]}=
\mathfrak{R}_\gamma^{[m+\chi+\delta]}\cap
\left.\mathcal H_\gamma^{[\chi]}\right.^*\!(
\mathfrak P_\gamma^{[m+2\chi+\delta]}),$ choose bases 
$\{R_{\gamma,i}^{[m+\chi]}\}_{i=1}^{ s_{m+\chi}}$ and 
$\{\widetilde R_{\gamma,i}^{[m+\chi+\delta]}\}_{i=1}^{%
\widetilde s_{m+\chi+\delta}}$ respectively.

Define matrices $A=\{\langle\langle R_{\gamma,i}^{[m+\chi]},
S_{\gamma,j}^{[m+\chi]}\rangle\rangle\}_{i,j=1}^{ s_{m+\chi}}$ and
$\widetilde A=\{\langle\langle \widetilde R_{\gamma,i}^{[m+\chi+\delta]},
\widetilde S_{\gamma,j}^{[m+\chi+\delta]}\rangle\rangle\}_{i,j=1}^{%
\widetilde  s_{m+\chi+\delta}},$ vectors 
$c=\{\langle\langle R_{\gamma,i}^{[m+\chi]},
G_\gamma^{[m+\chi]}\rangle\rangle\}_{i=1}^{ s_{m+\chi}}$ and
$\widetilde c=\{\langle\langle \widetilde R_{\gamma,i}^{[m+\chi+\delta]},
F_\gamma^{[m+\chi+\delta]}\rangle\rangle\}_{i=1}^{%
\widetilde s_{m+\chi+\delta}},$ $b=A^{-1}c$ and $\widetilde b=
\widetilde A^{-1}\widetilde c,$ and QHPs
$\widetilde G_\gamma^{[m+\chi]}=\sum_{j=1}^{ s_{m+\chi}} b_j 
S_{\gamma,j}^{[m+\chi]}$ and $\widetilde F_\gamma^{[m+\chi+\delta]}=
\sum_{j=1}^{ \widetilde s_{m+\chi+\delta}}\widetilde b_j 
\widetilde S_{\gamma,j}^{[m+\chi+\delta]}.$

Then $\langle\langle R_{\gamma,i}^{[m+\chi]},
\widetilde G_\gamma^{[m+\chi]}-G_\gamma^{[m+\chi]}\rangle\rangle=
\sum_{j=1}^{ s_{m+\chi}}A_{ij}b_j-c_i=0$ for all $i=\overline{1, s_{m+\chi}}.$

Hence, by the Fredholm alternative, we obtain the QHP $J_\gamma^{[m]}$ 
satisfying~$(\ref{Eq12}_2),$ up to an integral of the VQHP 
$\mathcal H_\gamma^{[\chi]}.$

To complete the proof, it is enough to consider the case where
$\widetilde G_\gamma^{[m+\chi]}=G_\gamma^{[m+\chi]}.$

We have $\langle\langle\widetilde R_{\gamma,i}^{[m+\chi+\delta]},
\widetilde F_\gamma^{[m+\chi+\delta]}-F_\gamma^{[m+\chi+\delta]}\rangle\rangle=
\sum_{j=1}^{\widetilde s_{m+\chi+\delta}}\widetilde A_{ij}
\widetilde b_j-\widetilde c_i=0$ for all 
$i=\overline{1,\widetilde s_{m+\chi+\delta}}.$ Hence, by the Fredholm 
alternative and the definition of $\widetilde R_{\gamma,i}^{[m+\chi+\delta]},$ 
the QHP $\widetilde F_\gamma^{[m+\chi+\delta]}-F_\gamma^{[m+\chi+\delta]}$ 
can be represenred in the form~$(\ref{Eq12}_1)$ where 
$\mathcal H_\gamma^{[\chi]}(K_\gamma^{[m+\chi+\delta]})=0.$ And according to 
Lemma~2, every such QHP $K_\gamma^{[m+\chi+\delta]}$ can be obtained by 
choosing an appropriate $J_\gamma^{[m]}$ in~\eqref{Eq11} such that 
$\mathcal H_\gamma^{[\chi]}(J_\gamma^{[m]})=0.$

So, we have proved the existense of the QHPs $I_\gamma^{[m+\delta]}$ and 
$J_\gamma^{[m]}$ that satisfy~\eqref{Eq12}, and hence, there exists a 
VQHP $\mathcal Q_\gamma^{[m]}$ such that the transformation~\eqref{Eq2} with 
$\mathrm{ord}_\gamma \mathcal Q=m\ge1$ takes system~\eqref{Eq8} to the 
view~\eqref{Eq10} with $\widetilde F_\gamma^{[m+\chi+\delta]}\in 
Lin(\widetilde{\mathfrak S}_\gamma^{[m+\chi+\delta]})$ and
$\widetilde G_\gamma^{[m+\chi]}\in Lin(\mathfrak S_\gamma^{[m+\chi]}).$

Hence, step by step increasing $m,$ we find the required transformation as a 
composition of the transformations obtained on each step.
\end{proof}
\section{Reduction of the GPHNF to GNF}
Consider GPHNF~\eqref{Eq10}
$\dot y=\mathcal H_\gamma^{[\chi]}(y)+\sum_{k=1}^\infty 
\big(\widetilde{\mathcal F}_\gamma^{[k+\chi]}(y)+
\widetilde G_\gamma^{[k+\chi]}(y)\mathcal E_\gamma(y)\big)$
generated by arbitrary minimal sets 
$\mathfrak S_\gamma,\widetilde{\mathfrak S}_\gamma.$

Coefficients of its perturbation $\mathcal Y$ can be expressed in terms of 
$\widetilde F$ and $\widetilde G$ as follows:
\begin{equation}
\label{Eq13}
Y_1^{(p_1+1,p_2)}=-(p_2+1)\widetilde{F}^{(p_1+1,p_2+1)}+
\gamma_1 \widetilde{G}^{(p_1,p_2)},\quad
Y_2^{(p_1,p_2+1)}=(p_1+1)\widetilde{F}^{(p_1+1,p_2+1)}+
\gamma_2 \widetilde{G}^{(p_1,p_2)}.
\end{equation}

Note that if $y_1^{p_1} y_2^{p_2}\not\in \mathfrak S_\gamma,$ then there 
exists a vector $q=(q_1,q_2)$ with integer nonnegative components such that 
$\langle p-q,\gamma\rangle=\chi$ and at least one of the coefficients
$[\mathcal H_\gamma^{[\chi]},y_1^{q_1} y_2^{q_2} 
\mathcal E_\gamma]_1^{(p_1+1,p_2)}$ and
$[\mathcal H_\gamma^{[\chi]},y_1^{q_1} y_2^{q_2}
\mathcal E_\gamma]_2^{(p_1,p_2+1)}$ is nonzero.

Define a subset $\mathfrak Y$ of the set of coefficients of $\mathcal Y,$  
element by element, as follows: 
\begin{itemize}
	\item[i)] $Y_1^{(p_1+1,p_2)},Y_2^{(p_1,p_2+1)}\in\mathfrak Y,$ if 
	$y_1^{p_1+1} y_2^{p_2+1}\in \widetilde{\mathfrak S}_\gamma,$
	$y_1^{p_1} y_2^{p_2}\in \mathfrak S_\gamma;$
	\item[ii)] either $Y_1^{(p_1+1,p_2)}\in\mathfrak Y$ or 
	$Y_2^{(p_1,p_2+1)}\in\mathfrak Y,$ if 
	$y_1^{p_1+1} y_2^{p_2+1}\not\in \widetilde{\mathfrak S}_\gamma,$
	$y_1^{p_1} y_2^{p_2}\in \mathfrak S_\gamma;$	
	\item[iii)] either $Y_1^{(p_1+1,p_2)}\in\mathfrak Y$ or 
	$Y_2^{(p_1,p_2+1)}\in\mathfrak Y,$ if
	$y_1^{p_1+1} y_2^{p_2+1}\in \widetilde{\mathfrak S}_\gamma,$
	$y_1^{p_1} y_2^{p_2}\not\in \mathfrak S_\gamma$ and there exists a vector
	$q=(q_1,q_2)$ with nonnegative integer components such that	
	$\langle p-q,\gamma\rangle=\chi$ and, respectively,
	$[\mathcal H_\gamma^{[\chi]},y_1^{q_1} y_2^{q_2} 
	\mathcal E_\gamma]_2^{(p_1,p_2+1)}\ne0$ or
	$[\mathcal H_\gamma^{[\chi]},y_1^{q_1} y_2^{q_2}
	\mathcal E_\gamma]_1^{(p_1+1,p_2)}\ne0.$
\end{itemize}
\begin{theorem}\ \ 
Given any system~\eqref{Eq1} with a Hamiltonian unperturbed part 
$\mathcal P_\gamma^{[\chi]}=\mathcal H_\gamma^{[\chi]}$ where 
$\mathcal H_\gamma^{[\chi]}=(-\partial_2 H_\gamma^{[\chi+\delta]}, 
\partial_1 H_\gamma^{[\chi+\delta]}),$ and given arbitrary Hamiltonian 
resonant set $\mathfrak S_\gamma,$ Hamiltonian reduced resonant set 
$\widetilde{\mathfrak S}_\gamma,$ and the set $\mathfrak Y$ constructed by the 
above rules, then there exists a near-identity formal 
transformation~\eqref{Eq2} that brings it to the form~\eqref{Eq3}, where all 
coefficients of the perturbation are zero, except for the coefficients from 
$\mathfrak Y$ that have arbitrary values. Moreover, the obtained system is a 
GNF.
\end{theorem}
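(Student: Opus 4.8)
The plan is to construct the transformation in two stages: normalize to a GPHNF, then kill the redundant coefficient in each linked pair. By the existence theorem for the GPHNF (the preceding theorem) I would first bring the system to the form~\eqref{Eq10} with $\widetilde F_\gamma^{[k+\chi+\delta]}\in Lin(\widetilde{\mathfrak S}_\gamma^{[k+\chi+\delta]})$ and $\widetilde G_\gamma^{[k+\chi]}\in Lin(\mathfrak S_\gamma^{[k+\chi]})$. Because the sets are minimal, formula~\eqref{Eq13} shows that the coefficients of $\mathcal Y$ which can survive sit at positions $(p_1+1,p_2),(p_1,p_2+1)$ for which $y_1^{p_1}y_2^{p_2}\in\mathfrak S_\gamma$ or $y_1^{p_1+1}y_2^{p_2+1}\in\widetilde{\mathfrak S}_\gamma$; the four membership patterns are exactly the trivial one (neither holds, both coefficients vanish) together with situations (i)--(iii). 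The second stage handles (ii) and (iii).

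For the reduction I would argue case by case. In case (i) both $\widetilde F^{(p_1+1,p_2+1)}$ and $\widetilde G^{(p_1,p_2)}$ are free, and the $2\times2$ map from $(\widetilde F^{(p_1+1,p_2+1)},\widetilde G^{(p_1,p_2)})$ to $(Y_1^{(p_1+1,p_2)},Y_2^{(p_1,p_2+1)})$ in~\eqref{Eq13} has determinant $k+\chi+\delta\ne0$, so both coordinates are independent and are kept. In cases (ii),(iii) only one datum survives, the two coordinates become proportional with nonzero factors, and one must be removed by an extra near-identity change. In case (ii) the survivor is $\widetilde G$; I would activate a Hamiltonian generator $\mathcal I_\gamma^{[k]}$, whose bracket with $\mathcal H_\gamma^{[\chi]}$ is divergence-free and hence changes only $\widetilde F$ while fixing the combination $(p_1+1)Y_1^{(p_1+1,p_2)}+(p_2+1)Y_2^{(p_1,p_2+1)}$, and since $y_1^{p_1+1}y_2^{p_2+1}\notin\widetilde{\mathfrak S}_\gamma$ marks that value of $\widetilde F$ reducible, this suffices to clear the discarded coordinate. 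In case (iii) the survivor is $\widetilde F$, so the compensating $\widetilde G$ must be injected by an Euler generator $y_1^{q_1}y_2^{q_2}\mathcal E_\gamma$ of generalized degree $k$ whose bracket~\eqref{Eq9} mixes a $\mathcal K$-part with an $\mathcal E$-part; because of this mixing the effect on the target coordinate is not automatic, and rule (iii) chooses $q$ precisely so that $[\mathcal H_\gamma^{[\chi]},y_1^{q_1}y_2^{q_2}\mathcal E_\gamma]_2^{(p_1,p_2+1)}\ne0$ (keeping $Y_1$) or $[\mathcal H_\gamma^{[\chi]},y_1^{q_1}y_2^{q_2}\mathcal E_\gamma]_1^{(p_1+1,p_2)}\ne0$ (keeping $Y_2$), such a $q$ existing by the remark preceding the construction.

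These steps are carried out one generalized degree at a time, the leading term of a degree-$k$ generator affecting only coefficients of g.~d.\ $\ge k+\chi$. Within a fixed degree the entire second stage is the single linear-algebra assertion that the coordinate directions indexed by $\mathfrak Y^{[k+\chi]}$ form a complement to $\mathrm{Image}\,[\mathcal H_\gamma^{[\chi]},\,\cdot\,]$ in $\mathfrak V_\gamma^{[k+\chi]}$, equivalently that $\det(\{\mathbf a_{i,l_j}^{k}\}_{i,j=1}^{n_k})\ne0$. The dimensions already match: writing $P_1,P_2,P_3$ for the index sets of (i)--(iii), one has $s_{k+\chi}=|P_1|+|P_2|$ and $\widetilde s_{k+\chi+\delta}=|P_1|+|P_3|$, so $|\mathfrak Y^{[k+\chi]}|=2|P_1|+|P_2|+|P_3|=s_{k+\chi}+\widetilde s_{k+\chi+\delta}=n_k$.

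To get the nonsingularity I would express the resonance functionals as the two Hamiltonian pairings $\langle\langle R_{\gamma,i}^{[k+\chi]},\widetilde G_\gamma^{[k+\chi]}\rangle\rangle$ and $\langle\langle \widetilde R_{\gamma,i}^{[k+\chi+\delta]},\widetilde F_\gamma^{[k+\chi+\delta]}\rangle\rangle$ and rewrite them via~\eqref{Eq13} in the $\mathfrak Y$-coordinates. On the coordinates from $P_1,P_2$ (respectively $P_1,P_3$) they reproduce the matrices $A$ (respectively $\widetilde A$) from the proof of the GPHNF theorem, which are nonsingular exactly because $\mathfrak S_\gamma,\widetilde{\mathfrak S}_\gamma$ are minimal; the only off-diagonal coupling comes from the $P_3$-coordinates entering the $\widetilde G$-pairings (and the $P_2$-coordinates entering the $\widetilde F$-pairings), and this is where the $q$-condition is decisive, guaranteeing that the partner discarded in case (iii) is removed by an image element without reviving an already-cleared coordinate, so that the full matrix stays triangular against these couplings. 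I therefore expect the main obstacle to be this simultaneous consistency rather than any single cancellation, with the $q$-condition --- available by the remark before the construction --- the new ingredient that settles it. Since $\det(\{\mathbf a_{i,l_j}^{k}\})\ne0$ is the defining property of a resonant set, the system obtained after the second stage is a GNF generated by $\mathfrak Y$; composing the per-degree changes with the initial GPHNF-normalizing transformation yields the single near-identity transformation~\eqref{Eq2} claimed.
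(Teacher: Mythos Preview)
Your proposal is correct and matches the paper's approach: first normalize to a GPHNF, verify the dimension count $|\mathfrak Y^{[k+\chi]}|=s_{k+\chi}+\widetilde s_{k+\chi+\delta}=n_k$, and then in cases (ii) and (iii) eliminate the redundant coefficient of each linked pair by, respectively, adjusting $\widetilde F$ (your ``Hamiltonian generator $\mathcal I$'' is exactly the paper's ``add the term $-\gamma_2\widetilde G^{(p_1,p_2)}(p_1+1)^{-1}y_1^{p_1+1}y_2^{p_2+1}$ to $\widetilde F$'') and applying the Euler generator $C\,y_1^{q_1}y_2^{q_2}\mathcal E_\gamma$. The only organizational difference is that the paper opens by invoking Proposition~1, reducing the task to showing $\mathfrak Y$ is a resonant set, so your final block-triangularity discussion is superfluous---the dimension equality together with the explicit eliminating transformations already shows that the $\mathfrak Y$-directions complement the image of $[\mathcal H_\gamma^{[\chi]},\,\cdot\,]$.
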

\begin{proof}\ \
According to Proposition~1, it is enough to show that the set $\mathfrak Y$ is 
a resonant set for the unperturbed part $\mathcal H_\gamma^{[\chi]}.$

Denote $\mathfrak Y_\gamma^{[k+\chi]}=
\{Y_1^{(p_1+1,p_2)},Y_2^{(p_1,p_2+1)}\in\mathfrak Y:\
\langle p,\gamma\rangle=k+\chi\},$ $n_k=|\mathfrak Y_\gamma^{[k+\chi]}|,$
$r_k=|\mathfrak S_\gamma^{[k+\chi]}|,$ and $\widetilde r_k=
|\widetilde{\mathfrak S}_\gamma^{[k+\chi+\delta]}|.$

First, show that for all $k\ge1,$ the number $n_k$ is equal to the number of 
independent resonance equations in g.~d. $k+\chi,$ i.~e., 
$n_k=\dim\mathfrak V_\gamma^{[k+\chi]}-\dim [
\mathcal H_{\gamma}^{[\chi]},\mathfrak V_\gamma^{[k]}],$ where 
$\mathfrak V_\gamma^{[k+\chi]}$ denotes the linear space of VQHPs of g.~d. 
$k+\chi,$ and $[\mathcal H_{\gamma}^{[\chi]},\mathfrak V_\gamma^{[k]}]$ 
denotes the image of the linear operator $[\mathcal H_{\gamma}^{[\chi]},\,.\,]:
\mathfrak V_\gamma^{[k]}\rightarrow\mathfrak V_\gamma^{[k+\chi]}.$

Indeed, by construction, $n_k=r_k+\widetilde r_k.$ In turn, it follows from the
definition of the sets $\mathfrak S_\gamma$ and 
$\widetilde{\mathfrak S}_\gamma$ that $r_k=\dim\mathfrak P_\gamma^{[k+\chi]}-
\dim\mathcal H_\gamma^{[\chi]}(\mathfrak P_\gamma^{[k]})$ and 
$\widetilde r_k=\dim\mathfrak P_\gamma^{[k+\chi+\delta]}-
\dim\bigl(\mathcal H_\gamma^{[\chi]}(\mathfrak P_\gamma^{[k+\delta]})+
\mathrm{Ker\,}\mathcal H_\gamma^{[\chi]}\bigr|_{%
\mathfrak P_\gamma^{[k+\chi+\delta]}}\bigr).$
According to~\eqref{Eq6} and~\eqref{Eq9},
$\dim\mathfrak V_\gamma^{[k+\chi]}=\dim\mathfrak P_\gamma^{[k+\chi+\delta]}+
\dim\mathfrak P_\gamma^{[k+\chi]},$ and
$\dim [\mathcal H_{\gamma}^{[\chi]},\mathfrak V_\gamma^{[k]}]=
\dim\mathcal H_\gamma^{[\chi]}(\mathfrak P_\gamma^{[k]})+
\dim\bigl(\mathcal H_\gamma^{[\chi]}(\mathfrak P_\gamma^{[k+\delta]})+
\mathrm{Ker\,}\mathcal H_\gamma^{[\chi]}\bigr|_{%
\mathfrak P_\gamma^{[k+\chi+\delta]}}\bigr).$
Hence, $n_k=r_k+\widetilde r_k=\dim\mathfrak V_\gamma^{[k+\chi]}-
\dim [\mathcal H_{\gamma}^{[\chi]},\mathfrak V_\gamma^{[k]}].$

By construction, all the $n_k$ elements of the set 
$\mathfrak Y_\gamma^{[k+\chi]}$ can be uniquely expressed from 
the system of $n_k$ linear equations~\eqref{Eq13} in terms of coefficients of 
the QHPs $\widetilde F_\gamma^{[k+\chi+\delta]}, 
\widetilde G_\gamma^{[k+\chi]}$ from~\eqref{Eq10} (all but the $n_k$ 
coefficients of the perturbation of the GPHNF are zero in g.~d. $k+\chi$). 
Therefore, it remains to show transformations that eliminate the 
coefficients that do not belong to $\mathfrak Y,$ in cases~ii) and~iii).

ii) Let $Y_1^{(p_1+1,p_2)}\in\mathfrak Y,$
$y_1^{p_1+1} y_2^{p_2+1}\not\in \widetilde{\mathfrak S}_\gamma,$
$y_1^{p_1} y_2^{p_2}\in \mathfrak S_\gamma.$ Thereby, 
$Y_2^{(p_1,p_2+1)}\not\in\mathfrak Y$ and $(Y_1^{(p_1+1,p_2)} 
y_1^{p_1+1} y_2^{p_2}, Y_2^{(p_1,p_2+1)} y_1^{p_1} y_2^{p_2+1})= 
\widetilde{G}^{(p_1,p_2)} y_1^{p_1} y_2^{p_2} \mathcal E_\gamma.$
It follows then from the expansion
$$y_1^{p_1} y_2^{p_2} \mathcal E_\gamma= \frac{\gamma_2}{p_1+1} 
\Bigl(-(p_2+1)y_1^{p_1+1}y_2^{p_2},(p_1+1)y_1^{p_1}y_2^{p_2+1}\Bigr)+ 
\frac{\gamma_1(p_1+1)+\gamma_2(p_2+1)}{p_1+1}\Bigl(y_1^{p_1+1}y_2^{p_2},0
\Bigr)$$
that the coefficient $Y_2^{(p_1,p_2+1)}$ can be set to zero by adding the term 
$-\gamma_2 \widetilde{G}^{(p_1,p_2)}(p_1+1)^{-1} y_1^{p_1+1} y_2^{p_2+1}$ 
to~$\widetilde{F}_\gamma^{[k+\chi+\delta]}$ in the proof of Theorem~1, which 
is possible, since $y_1^{p_1+1} y_2^{p_2+1}\not\in
\widetilde{\mathfrak S}_\gamma.$

Similarly, in case where $Y_2^{(p_1,p_2+1)}\in\mathfrak Y,$
$y_1^{p_1+1} y_2^{p_2+1}\not\in \widetilde{\mathfrak S}_\gamma,$
$y_1^{p_1} y_2^{p_2}\in \mathfrak S_\gamma,$ we eliminate the coefficient
$Y_1^{(p_1+1,p_2)}\not\in\mathfrak Y.$

iii) Let $Y_1^{(p_1+1,p_2)}\in\mathfrak Y,$
$y_1^{p_1+1} y_2^{p_2+1}\in \widetilde{\mathfrak S}_\gamma,$
$y_1^{p_1} y_2^{p_2}\not\in \mathfrak S_\gamma,$
and let there exist a vector $q=(q_1,q_2)$ with nonnegative integer components 
$q_1,q_2$ such that $\langle p-q,\gamma\rangle=\chi,$ 
$[\mathcal H_\gamma^{[\chi]},y_1^{q_1} y_2^{q_2}
\mathcal E_\gamma]_2^{(p_1,p_2+1)}\ne0.$ Thereby, 
$Y_2^{(p_1,p_2+1)}\not\in\mathfrak Y$ and
$$\Bigl(Y_1^{(p_1+1,p_2)} y_1^{p_1+1} y_2^{p_2},
Y_2^{(p_1,p_2+1)} y_1^{p_1} y_2^{p_2+1}\Bigr)=
\widetilde{F}^{(p_1+1,p_2+1)} \Bigl(-(p_2+1) y_1^{p_1+1}y_2^{p_2},
(p_1+1) y_1^{p_1}y_2^{p_2+1}\Bigr).$$
Then the coefficient $Y_2^{(p_1,p_2+1)}$ can be set to zero by using a
transformation of the form~\eqref{Eq2} where 
$\mathcal Q=C y_1^{q_1} y_2^{q_2} \mathcal E$ with an appropriate coefficient 
$C.$ Herewith, all the extra terms can be taken into account by adding in a 
suitable way the terms that do not belong to $\widetilde{\mathfrak S}_\gamma$ 
and $\mathfrak S_\gamma$ to the QHPs $\widetilde{F}_\gamma^{[k+\chi+\delta]}$ 
and $\widetilde{G}_\gamma^{[k+\chi]}$ respectively (see the proof of 
Theorem~1). 

Similarly, if $Y_2^{(p_1,p_2+1)}\in\mathfrak Y,$
$y_1^{p_1+1} y_2^{p_2+1}\in \widetilde{\mathfrak S}_\gamma,$
$y_1^{p_1} y_2^{p_2}\not\in \mathfrak S_\gamma,$ and 
$[\mathcal H_\gamma^{[\chi]},y_1^{q_1} y_2^{q_2}
\mathcal E_\gamma]_1^{(p_1+1,p_2)}\ne0,$ we eliminate the coefficient
$Y_1^{(p_1+1,p_2)}\not\in\mathfrak Y.$
\end{proof}
\begin{remark}\ \
If there are several pairs of coefficients $Y_1^{(p_1+1,p_2)}, 
Y_2^{(p_1,p_2+1)}$ in one and the same g.~d. that fit case iii), then there 
exists the same number of monomials $y_1^{q_1} y_2^{q_2}$ that satisfy the 
conditions described for this case, and the coefficients for the 
transformations are obtained from an algebraic system with nonzero 
determinant that expresses the equality to zero of the corresponding 
coefficients of $\mathcal Y.$
\end{remark}
\section{GNFs of systems the Hamiltonian unperturbed part of which has 
monomial components}
In this section, using Theorems~1 and~2, we compute GNFs for systems with a 
Hamiltonian unperturbed part represented by a vector monomial, i.~e. a vector 
with the monomial components. The results are compared with the known GNFs, 
obtained earlier by Takens~\cite{takens}, Baider and Sanders~\cite{sanders2}, 
Basov et~al.~\cite{basov03,basov,fed,skit}.
\subsection{The unperturbed part $(x_2^{m-1},0)$ with $m\ge2$}
Consider system~\eqref{Eq8} where $\mathcal H^{[\chi]}_\gamma=(x_2^{m-1},0),$ 
$H^{[\chi+\delta]}_\gamma=-\cfrac{x_2^{m}}{m},$ $\gamma=(1,1),$ $\chi=m-2,$ 
$m\ge2.$

In this case, the space of resonant polynomials has the view (see~\cite{BV})
$$\mathfrak R_\gamma=Lin(\{x_1^{p_1} x_2^{p_2}:\ p_2=\overline{0,m-2}\,\}).$$
$\mathfrak R_\gamma$ does not contain integrals of
$\mathcal H_\gamma^{[\chi]}$ of degree higher than $m,$ that is, every 
Hamiltonian reduced resonant set is a Hamiltonian resonant set, in degrees 
higher than $m.$
\begin{corollary}\ \ 
Given any system~\eqref{Eq1} with the unperturbed part $(x_2^{m-1},0)$ where 
$m\ge2,$ then there exists a near-identity formal transformation~\eqref{Eq2} 
that brings it into the GNF 
\begin{equation}
\label{Eq14}
\begin{split}
&\dot y_1=y_2^{m-1}+
\sum_{i=m}^\infty\sum_{j=0}^{m-3}Y_1^{(i-j,j)}y_1^{i-j}y_2^j
+y_1^{2} y_2^{m-2} \sum_{i=0}^\infty
Y_1^{(i+2,m-2)} y_1^{i},\\
&\dot y_2=
\sum_{i=m}^\infty\sum_{j=0}^{m-2}Y_2^{(i-j,j)}y_1^{i-j}y_2^j+
y_1 y_2^{m-1} \sum_{i=0}^\infty
Y_2^{(i+1,m-1)} y_1^{i}
\end{split}
\end{equation}
where for each $i\ge0,$ we take either $Y_1^{(i+2,m-2)}=0$ or 
$Y_2^{(i+1,m-1)}=0.$
\end{corollary}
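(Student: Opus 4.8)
The plan is to apply Theorem~2 directly: I would identify the minimal Hamiltonian resonant and reduced resonant sets for $\mathcal H_\gamma^{[\chi]}=(x_2^{m-1},0),$ construct the corresponding set $\mathfrak Y$ by rules i)--iii), and recognize that the resulting GNF is exactly~\eqref{Eq14}.

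First I would pin down the resonant structure. Since $H_\gamma^{[\chi+\delta]}=-x_2^m/m,$ formula~\eqref{Eq7} gives $\left.\mathcal H_\gamma^{[\chi]}\right.^*=x_1\partial_2^{m-1},$ whose kernel is exactly $\mathfrak R_\gamma=Lin(\{x_1^{p_1}x_2^{p_2}:p_2\le m-2\}),$ as quoted. A short computation shows that $x_1\partial_2^{m-1}$ maps $\mathfrak P_\gamma^{[j+\chi]}$ onto the span of all degree-$j$ monomials except $x_2^j;$ intersecting with $\mathfrak R_\gamma^{[j]}$ removes $x_2^j$ only when $j\le m-2.$ As the degrees relevant to~\eqref{Eq10} are $k+\chi=k+m-2\ge m-1$ (for $\widetilde G$) and $k+\chi+\delta=k+m\ge m+1$ (for $\widetilde F$), in all of them the reduced resonant set coincides with the resonant set, matching the remark preceding the corollary. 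I would then take the minimal sets to be the resonant monomials themselves; the determinant conditions of Definition~\ref{Nabor} hold because distinct monomials are orthogonal under $\langle\langle\,.\,,\,.\,\rangle\rangle,$ so the Gram matrices are diagonal with nonzero entries $p_1!\,p_2!.$

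Next I would run the construction of $\mathfrak Y.$ For a pair $(Y_1^{(p_1+1,p_2)},Y_2^{(p_1,p_2+1)})$ the governing memberships are $y_1^{p_1}y_2^{p_2}\in\mathfrak S_\gamma\Leftrightarrow p_2\le m-2$ and $y_1^{p_1+1}y_2^{p_2+1}\in\widetilde{\mathfrak S}_\gamma\Leftrightarrow p_2\le m-3.$ Hence for $p_2\le m-3$ rule~i) keeps both coefficients; for $p_2=m-2$ rule~ii) keeps exactly one; and for $p_2\ge m-1$ neither is kept. Crucially, rule~iii) never fires here, since it would require $p_2\le m-3$ and $p_2\ge m-1$ simultaneously. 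Translating the surviving indices through~\eqref{Eq13} (with $\gamma=(1,1)$) gives precisely the coefficients in~\eqref{Eq14}: the range $p_2\in\{0,\dots,m-3\}$ produces the first double sums of $\dot y_1$ and $\dot y_2,$ the boundary reduced-resonant monomials $y_1^{a}$ (zero $y_2$-exponent) produce the $j=0$ slice of the $\dot y_2$ sum, and the single case $p_2=m-2$ in each generalized degree produces the last sums together with the alternative ``$Y_1^{(i+2,m-2)}=0$ or $Y_2^{(i+1,m-1)}=0$.'' A dimension count confirms consistency: $n_k=r_k+\widetilde r_k=(m-1)+(m-1)=2m-2$ equals the number of coefficients of~\eqref{Eq14} in each generalized degree. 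With $\mathfrak Y$ thus identified as a resonant set, the corollary follows from Theorem~2.

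The main obstacle I anticipate is bookkeeping rather than conceptual difficulty: one must keep three index conventions aligned---the pair parameter $p=(p_1,p_2),$ the actual exponents in the $Y$-coefficients, and the summation indices $i,j$ in~\eqref{Eq14}---and handle the boundary coefficients $Y_1^{(0,b)}$ and $Y_2^{(a,0)}$ carefully, verifying that $Y_1^{(0,b)}$ is forced to vanish (so $\dot y_1$ carries no pure power of $y_2$) while $Y_2^{(a,0)}$ survives. The one genuinely structural point to check is that rule~iii) is vacuous, which is what makes the either/or choice in~\eqref{Eq14} the only nontrivial freedom beyond the two independent families.
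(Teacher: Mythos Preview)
Your approach is correct and essentially identical to the paper's: both choose the minimal sets $\mathfrak S_\gamma^{[k]}=\widetilde{\mathfrak S}_\gamma^{[k]}=\{x_1^{p_1}x_2^{p_2}:\ p_2\le m-2,\ |p|=k\}$ for $k>m$ and invoke Theorem~2. The paper's proof is terser---it first observes that the GPHNF from Theorem~1 already has the shape~\eqref{Eq14} with the sole linkage $Y_1^{(i+2,m-2)}=Y_2^{(i+1,m-1)}=\widetilde G^{(i+1,m-2)}$ and then appeals to Theorem~2 just for that either/or choice, rather than enumerating $\mathfrak Y$ via rules i)--iii) as you do (and, incidentally, the boundary coefficient $Y_2^{(a,0)}$ you flag is precisely the $p_2=-1$ instance of rule~iii), so that rule is not quite vacuous once boundaries are included).
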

\begin{proof}\ \ 
For all $k>m,$ choose the Hamiltonian resonant sets
$$\mathfrak S_\gamma^{[k]},\widetilde{\mathfrak S}_\gamma^{[k]}=
\{x_1^{p_1} x_2^{p_2}:\ p_2=\overline{0,m-2};\ |p|=k\,\}.$$
Then GPHNF~\eqref{Eq10} takes the form~\eqref{Eq14} with 
$Y_1^{(i+2,m-2)}=Y_2^{(i+1,m-1)}=\widetilde G^{(i+1,m-2)}.$ Hence, the 
corollary follows from Theorem~2.
\end{proof}

For $m=2,$ formula~\eqref{Eq14} gives \textit{the Takens normal 
form}~\cite{takens}, and for $m=3,$ it gives~\cite[Th.~11]{fed}.
\subsection{The unperturbed part $(-m x_1^{l} x_2^{m-1}, l x_1^{l-1} x_2^{m})$ 
with $l>m\ge1$}
Consider system~\eqref{Eq8} where 
$\mathcal H^{[\chi]}_\gamma=(-m x_1^{l} x_2^{m-1}, l x_1^{l-1} x_2^{m}),$
$H^{[\chi+\delta]}_\gamma={x_1^l x_2^m},$ $\gamma=(1,1),$ $\chi=l+m-2,$ 
$l>m\ge1,$ and $\text{GCD}(l,m)=d.$

In this case, the space of resonant polynomials has the view (see~\cite{BV})
$$\mathfrak R_\gamma=Lin\bigl(\{x_1^{p_1} x_2^{p_2}:\ p_1=\overline{0,l-2},
\text{ or } p_2=\overline{0,m-2},\text{ or } p_1=rl/d-1,\ p_2=rm/d-1,\ 
r\ge d\,\}\bigr).$$
$\mathfrak R_\gamma$ does not contain integrals of 
$\mathcal H_\gamma^{[\chi]}$ of degree higher than $l+m,$ that is, every 
Hamiltonian reduced resonant set is a Hamiltonian resonant set, in degrees 
higher than $l+m.$
\begin{corollary}\ \ 
Given any system~\eqref{Eq1} with the unperturbed part $(-m x_1^{l} x_2^{m-1}, 
l x_1^{l-1} x_2^{m})$ where $l>m\ge1$ and $\mathrm{GCD}(l,m)=d,$ then there 
exists a near-identity formal transformation~\eqref{Eq2} that brings it into 
the GNF
\begin{equation}\label{Eq15}
\begin{split}
&\dot y_1=-m y_1^{l} y_2^{m-1}+
 \sum_{k=l+m}^\infty\Bigl(\sum_{i=0}^{l-2} Y_1^{(i,k-i)}y_1^i y_2^{k-i}+
 \sum_{j=0}^{m-3} Y_1^{(k-j,j)}y_1^{k-j} y_2^j\Bigr)+\\
&\qquad +y_1^{l+2} y_2^{m-2}\sum_{i=0}^\infty Y_1^{(i+l+2,m-2)}y_1^{i}+
 y_1^{l-1} y_2^{m+1} \sum_{j=0}^\infty Y_1^{(l-1,m+j+1)}y_2^{j}+\\
&\qquad\qquad +\sum_{r=d+1}^\infty Y_1^{(rl/d,rm/d-1)} y_1^{rl/d} y_2^{rm/d-1}+
 \sum_{s=d+1+[\frac{3d-1}{l+m}]}^\infty Y_1^{(sl/d-1,sm/d-2)} y_1^{sl/d-1} y_2^{sm/d-2},\\
&\dot y_2=l y_1^{l-1} y_2^{m}+
 \sum_{k=l+m}^\infty\Bigl(\sum_{i=0}^{l-3} Y_2^{(i,k-i)}y_1^i y_2^{k-i}+
 \sum_{j=0}^{m-2} Y_2^{(k-j,j)}y_1^{k-j} y_2^j\Bigr)+\\
&\qquad +y_1^{l+1} y_2^{m-1} \sum_{i=0}^\infty Y_2^{(i+l+1,m-1)}y_1^{i}+
 y_1^{l-2} y_2^{m+2}\sum_{j=0}^\infty Y_2^{(l-2,m+j+2)}y_2^{j}+\\
&\qquad\qquad +\sum_{r=d+1}^\infty Y_2^{(rl/d-1,rm/d)} y_1^{rl/d-1} y_2^{rm/d}+
 \sum_{s=d+1+[\frac{3d-1}{l+m}]}^\infty Y_2^{(sl/d-2,sm/d-1)} y_1^{sl/d-2} y_2^{sm/d-1}
\end{split}
\end{equation}
where for each $i,j\ge0,$ $r\ge d+1$ and $s\ge d+1+[\frac{3d-1}{l+m}],$ we take
either $Y_1^{(i+l+2,m-2)}=0$ or $Y_2^{(i+l+1,m-1)}=0,$ either 
$Y_1^{(l-1,m+j+1)}=0$ or $Y_2^{(l-2,m+j+2)}=0,$ either $Y_1^{(rl/d,rm/d-1)}=0$ 
or $Y_2^{(rl/d-1,rm/d)}=0,$ and in case $m=1,$ we take 
$Y_2^{(sl/d-2,sm/d-1)}=0,$ and in case $m\ge2$ we take either 
$Y_1^{(sl/d-1,sm/d-2)}=0$ or $Y_2^{(sl/d-2,sm/d-1)}=0.$
\end{corollary}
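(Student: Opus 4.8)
The plan is to argue exactly as in the previous corollary: pick explicit minimal (monomial) Hamiltonian resonant and reduced resonant sets $\mathfrak S_\gamma,\widetilde{\mathfrak S}_\gamma$ drawn from the three families that span $\mathfrak R_\gamma,$ write down the corresponding GPHNF~\eqref{Eq10}, and then apply Theorem~2, reading off the resonant set $\mathfrak Y$ through the rules i)--iii). Because $\gamma=(1,1),$ generalized degree coincides with ordinary degree, and by~\eqref{Eq13} a coefficient pair $Y_1^{(p_1+1,p_2)},Y_2^{(p_1,p_2+1)}$ is governed by the reduced-resonant monomial $y_1^{p_1+1}y_2^{p_2+1}$ (through $\widetilde F$) and the resonant monomial $y_1^{p_1}y_2^{p_2}$ (through $\widetilde G$); tracking the degree $p_1+p_2=k+\chi=k+l+m-2$ then pins down which indices $i,j,r,s$ actually occur and explains why the diagonal sums start near $r,s=d+1$ (the value $r=d$ lands in the unperturbed part, $k=0$).

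First I would fix, in every degree, the monomial basis of $\mathfrak R_\gamma^{[k]}$ coming from the families $\{p_1\le l-2\},$ $\{p_2\le m-2\}$ and the diagonal $\{(p_1,p_2)=(rl/d-1,rm/d-1):r\ge d\},$ and take it as $\mathfrak S_\gamma^{[k]}.$ For $\widetilde{\mathfrak S}_\gamma^{[k]}$ I would use the stated fact that $\mathfrak R_\gamma$ carries no integral of $\mathcal H_\gamma^{[\chi]}$ above degree $l+m,$ so that $\widetilde{\mathfrak R}_\gamma^{[k]}=\mathfrak R_\gamma^{[k]}$ and hence $\widetilde{\mathfrak S}_\gamma^{[k]}=\mathfrak S_\gamma^{[k]}$ for all $k>l+m.$ The only resonant monomials lying outside the image $\left.\mathcal H_\gamma^{[\chi]}\right.^*(\mathfrak P_\gamma^{[k+\chi]})$ are the ones on the diagonal $lp_2=mp_1,$ which meet $\{p_1\le l-2\}\cup\{p_2\le m-2\}$ only for degrees strictly below $l+m;$ there $\widetilde{\mathfrak S}_\gamma$ is strictly smaller, but these degrees sit below the range of the free coefficients of~\eqref{Eq15}. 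With the two sets chosen, the GPHNF constraints $\widetilde F_\gamma^{[k+\chi+\delta]}\in Lin(\widetilde{\mathfrak S}_\gamma^{[k+\chi+\delta]})$ and $\widetilde G_\gamma^{[k+\chi]}\in Lin(\mathfrak S_\gamma^{[k+\chi]})$ turn, via~\eqref{Eq13}, into the coefficient layout of~\eqref{Eq15}.

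Next I would feed this GPHNF into Theorem~2 and sort the surviving coefficients family by family. When $y_1^{p_1}y_2^{p_2}$ lies in $\{p_1\le l-2\}$ or $\{p_2\le m-2\}$ and its shift $y_1^{p_1+1}y_2^{p_2+1}$ stays in the same family, rule~i) keeps both coefficients; these generate the two interior double sums of~\eqref{Eq15}. When the shift leaves $\mathfrak R_\gamma$---at the family boundaries $p_2=m-2,$ $p_1=l-2,$ and in the diagonal case where $y_1^{p_1+1}y_2^{p_2+1}=y_1^{rl/d}y_2^{rm/d}$ is not resonant for $r\ge d$---rule~ii) keeps only one coefficient of the pair: this yields the edge blocks $y_1^{l+2}y_2^{m-2}\sum_i(\cdots),$ $y_1^{l-1}y_2^{m+1}\sum_j(\cdots)$ with their $\dot y_2$-counterparts, and the $r$-sum, each carrying the stated ``either/or.'' Finally, when $y_1^{p_1+1}y_2^{p_2+1}=y_1^{sl/d-1}y_2^{sm/d-1}\in\widetilde{\mathfrak S}_\gamma$ while $y_1^{p_1}y_2^{p_2}=y_1^{sl/d-2}y_2^{sm/d-2}\notin\mathfrak S_\gamma,$ rule~iii) applies: I would exhibit a vector $q$ with $\langle p-q,\gamma\rangle=\chi$ and an Euler-type generator $Cy_1^{q_1}y_2^{q_2}\mathcal E_\gamma$ whose bracket removes the unwanted coefficient, producing the $s$-sum; the starting value $d+1+[\frac{3d-1}{l+m}]$ is precisely the least $s$ for which such a generator exists, and the degenerate case $m=1$ forces the single choice $Y_2^{(sl/d-2,sm/d-1)}=0.$

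I expect the main obstacle to be exactly this low-degree boundary analysis. On one side one must describe $\widetilde{\mathfrak S}_\gamma$ near degree $l+m$ precisely, deciding which near-diagonal monomials belong to $\left.\mathcal H_\gamma^{[\chi]}\right.^*(\mathfrak P_\gamma^{[k+\chi]});$ on the other side one must determine, for the sub-diagonal coefficients, the smallest degree at which a monomial $y_1^{q_1}y_2^{q_2}$ with nonnegative exponents and $[\mathcal H_\gamma^{[\chi]},y_1^{q_1}y_2^{q_2}\mathcal E_\gamma]\ne0$ in the required component is available---this is what produces the offset $[\frac{3d-1}{l+m}]$ and the $m=1$ exception. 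The second delicate point, needed to legitimize rule~iii), is that when several such pairs share one generalized degree, the linear system determining the generators has nonzero determinant (the Remark), so that all eliminations can be carried out simultaneously.
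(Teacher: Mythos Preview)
Your proposal is correct and follows essentially the same route as the paper: choose $\mathfrak S_\gamma^{[k]}=\widetilde{\mathfrak S}_\gamma^{[k]}$ (for $k>l+m$) to be the monomial basis of $\mathfrak R_\gamma^{[k]}$, read off the GPHNF via~\eqref{Eq13}, and reduce via Theorem~2, with the $s$--family handled by rule~iii) using an Euler generator $C y_1^{q_1}y_2^{q_2}\mathcal E_\gamma$.

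Two remarks on where you over--anticipate difficulties. First, the paper simply writes down the explicit generator $q=(sl/d-l-1,\,sm/d-m-1)$ and computes that $[\mathcal H_\gamma^{[\chi]},y^{q}\mathcal E_\gamma]$ shifts $Y_1^{(sl/d-1,sm/d-2)}$ by $C(1-m)(l+m)$ and $Y_2^{(sl/d-2,sm/d-1)}$ by $C(l-1)(l+m)$; the $m=1$ exception is then immediate since the first shift vanishes. Second, under the hypotheses $l>m\ge1$, $d=\gcd(l,m)$ one always has $l+m\ge3d$, so $[\tfrac{3d-1}{l+m}]=0$: the $s$--sum in fact starts at $s=d+1$, which is simultaneously the least $s$ for which the pair lies in the perturbation and for which $q$ has nonnegative entries. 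There is no genuine ``low--degree boundary analysis'' to perform, and since distinct $s$ give distinct generalized degrees $s(l+m)/d-3$, at most one rule~iii) pair occurs per degree and the Remark about simultaneous eliminations is not needed here.
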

\begin{proof} \ \
For all $k>l+m,$ choose the Hamiltonian resonant sets
\begin{multline}\notag
\mathfrak S_\gamma^{[k]},\widetilde{\mathfrak S}_\gamma^{[k]}=
\{x_1^{p_1} x_2^{p_2}:\ p_1=\overline{0,l-2}, \text{ or } 
p_2=\overline{0,m-2}, \text{ or }\\ p_1=rl/d-1,\ p_2=rm/d-1,\ r\ge d;\  
|p|=k\,\}.
\end{multline}
Then GPHNF~\eqref{Eq10} takes the form~\eqref{Eq15} where
$Y_1^{(l-1,m+j+1)}=Y_2^{(l-2,m+j+2)}=\widetilde G^{(l-2,m+j+1)},$
$Y_1^{(i+l+2,m-2)}=Y_2^{(i+l+1,m-1)}=\widetilde G^{(i+l+1,m-2)},$
$Y_1^{(rl/d,rm/d-1)}=Y_2^{(rl/d-1,rm/d)}=\widetilde G^{(rl/d-1,rm/d-1)},$ and 
$Y_1^{(sl/d-1,sm/d-2)}=(1-sm/d)\widetilde F^{(sl/d-1,sm/d-1)},$
$Y_2^{(sl/d-2,sm/d-1)}=(sl/d-1)\widetilde F^{(sl/d-1,sm/d-1)}.$

Transformation~\eqref{Eq2}, with 
$\mathcal Q= C y_1^{s l/d-l-1} y_2^{s m/d-m-1} \mathcal E$ where 
$C=\mathrm{const},$ changes the coefficients 
$Y_1^{(sl/d-1,sm/d-2)}$ and $Y_2^{(sl/d-2,sm/d-1)}$ by $C(1-m)(l+m)$ and 
$C(l-1)(l+m)$ respectevely. Hence, the corollary follows from Theorem~2.
\end{proof}

For $l=2,\,m=1,$ formula~\eqref{Eq15} gives~\cite[Th.~7]{skit}
for $\alpha=-1/2.$  
\subsection{The unperturbed part $(-x_1^{m} x_2^{m-1}, x_1^{m-1} x_2^{m})$ 
with $m\ge1$}
Consider system~\eqref{Eq8} where $\mathcal H^{[\chi]}_\gamma=
(-x_1^{m} x_2^{m-1}, x_1^{m-1} x_2^{m}),$ $H^{[\chi+\delta]}_\gamma=
{x_1^m x_2^m}/{m},$ $\gamma=(1,1),$ $\chi=2m-2,$ and $m\ge1.$

In this case, the space of resonant polynomials has the view (see~\cite{BV})
$$\mathfrak R_\gamma=Lin\bigl(\{x_1^{p_1} x_2^{p_2}:\ p_1=\overline{0,m-2}, 
\text{ or } p_2=\overline{0,m-2}, \text{ or } p_1=p_2\,\}\bigr).$$

Since monomials $x_1^k x_2^k$ $(k\ge0)$ are integrals of the unperturbed part, 
such monomials are absent in the Hamiltonian reduced resonant set, in degrees 
higher than $2m.$
\begin{corollary}\ \ Given any system~\eqref{Eq1} with the unperturbed part
$(-x_1^{m} x_2^{m-1}, x_1^{m-1} x_2^{m})$ where $m\ge1,$ then there exists a 
near-identity formal transformation~\eqref{Eq2} that brings it into the GNF 
\begin{equation}
\label{Eq16}
\begin{split}
&\dot y_1=-y_1^{m} y_2^{m-1}+\sum_{k=2m}^\infty\Bigl(\sum_{i=0}^{m-2} Y_1^{(i,k-i)}y_1^i y_2^{k-i}+
\sum_{j=0}^{m-3} Y_1^{(k-j,j)}y_1^{k-j} y_2^j\Bigr)+\\
&\qquad+
y_1^{m+2} y_2^{m-2}\sum_{i=0}^\infty Y_1^{(i+m+2,m-2)} y_1^{i}
+y_1^{m-1} y_2^{m+1} \sum_{j=0}^\infty Y_1^{(m-1,m+j+1)} y_2^{j}+\\
&\qquad\qquad+y_1^{m} y_2^{m}\sum_{r=0}^\infty Y_1^{(r+m+1,r+m)} y_1^{r+1} y_2^{r},\\
&\dot y_2=y_1^{m-1} y_2^{m}+\sum_{k=2m}^\infty\Bigl(\sum_{i=0}^{m-3} Y_2^{(i,k-i)}y_1^i y_2^{k-i}+
\sum_{j=0}^{m-2} Y_2^{(k-j,j)}y_1^{k-j} y_2^j\Bigr)+\\
&\qquad+y_1^{m+1} y_2^{m-1} \sum_{i=0}^\infty Y_2^{(i+m+1,m-1)} y_1^{i}+
y_1^{m-2} y_2^{m+2}\sum_{j=0}^\infty Y_2^{(m-2,m+j+2)} y_2^{j}+\\
&\qquad\qquad+y_1^{m} y_2^{m}\sum_{r=0}^\infty Y_2^{(r+m,r+m+1)} y_1^{r} y_2^{r+1}
\end{split}
\end{equation}
where for each $i,j,r\ge0,$ we take either $Y_1^{(i+m+2,m-2)}=0$ or 
$Y_2^{(i+m+1,m-1)}=0,$ either $Y_1^{(m-1,m+j+1)}=0$ or $Y_2^{(m-2,m+j+2)}=0,$
and either $Y_1^{(r+m+1,r+m)}=0$ or $Y_2^{(r+m,r+m+1)}=0.$
\end{corollary}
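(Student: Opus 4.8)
The plan is to follow the pattern of the proofs of Corollaries~1 and~2: exhibit explicit minimal Hamiltonian resonant and reduced resonant sets, read off the GPHNF from Theorem~1, and then invoke Theorem~2 to descend to the GNF. For every $k>2m$ I would take
$$\mathfrak S_\gamma^{[k]}=\{y_1^{p_1}y_2^{p_2}:\ p_1=\overline{0,m-2},\ \text{or}\ p_2=\overline{0,m-2},\ \text{or}\ p_1=p_2;\ |p|=k\},$$
and let $\widetilde{\mathfrak S}_\gamma^{[k]}$ be the same collection with the diagonal monomials $y_1^jy_2^j$ deleted. Both are monomial, hence minimal, sets, and the nonvanishing determinants required in Definition~\ref{Nabor} are automatic: distinct monomials are orthogonal under $\langle\langle\,.\,,\,.\,\rangle\rangle,$ so each Gram matrix is diagonal with positive entries. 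The removal of the diagonal monomials from $\widetilde{\mathfrak S}_\gamma$ in degrees above $2m$ is exactly the remark preceding the statement: the $y_1^jy_2^j$ are the only resonant integrals of $\mathcal H_\gamma^{[\chi]}$ (they satisfy $\mathcal H_\gamma^{[\chi]}(y_1^jy_2^j)=0$), and since $\mathrm{Im}\,(\mathcal H_\gamma^{[\chi]})^*=(\mathrm{Ker}\,\mathcal H_\gamma^{[\chi]})^\perp$ while a monomial is never self-orthogonal, no diagonal monomial lies in $(\mathcal H_\gamma^{[\chi]})^*(\mathfrak P_\gamma^{[k+\chi]}),$ whereas every off-diagonal resonant monomial does.

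With these sets fixed, Theorem~1 brings the system to the GPHNF~\eqref{Eq10}, whose coefficients I would translate by~\eqref{Eq13} with $\gamma=(1,1):$
$$Y_1^{(p_1+1,p_2)}=-(p_2+1)\widetilde F^{(p_1+1,p_2+1)}+\widetilde G^{(p_1,p_2)},\qquad Y_2^{(p_1,p_2+1)}=(p_1+1)\widetilde F^{(p_1+1,p_2+1)}+\widetilde G^{(p_1,p_2)}.$$
Sorting the monomials of $\widetilde F$ and $\widetilde G$ by their exponent ranges reproduces the six displayed sums of~\eqref{Eq16}. The two ``bulk'' double sums account for the off-diagonal (reduced) resonant monomials with $p_1\le m-2$ or $p_2\le m-2$; in the interior these occur as case~i pairs of $\mathfrak Y$ contributing to both components, while the edge monomials (pure powers $y_2^{b}$ or $y_1^{a}$, whose partner coefficient falls out of range) contribute a single free coefficient each, which is exactly why the two components carry the differing ranges $\overline{0,m-2}$ and $\overline{0,m-3}.$ The remaining three families are precisely those requiring a reduction.

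The crux is to verify that each of the three ``either\dots or'' families is an instance of case~ii of the $\mathfrak Y$-construction. Reading off $p=(p_1,p_2)$ for the pairs $\bigl(Y_1^{(i+m+2,m-2)},Y_2^{(i+m+1,m-1)}\bigr),$ $\bigl(Y_1^{(m-1,m+j+1)},Y_2^{(m-2,m+j+2)}\bigr)$ and $\bigl(Y_1^{(r+m+1,r+m)},Y_2^{(r+m,r+m+1)}\bigr),$ one checks in each case that $y_1^{p_1}y_2^{p_2}\in\mathfrak S_\gamma$ while $y_1^{p_1+1}y_2^{p_2+1}\notin\widetilde{\mathfrak S}_\gamma$ --- in the first two families the upper monomial is not resonant at all, and in the third it is a diagonal monomial of degree $>2m.$ Hence $\widetilde F^{(p_1+1,p_2+1)}=0,$ so $Y_1^{(p_1+1,p_2)}=Y_2^{(p_1,p_2+1)}=\widetilde G^{(p_1,p_2)},$ and by case~ii of Theorem~2 one of the two equal coefficients is annihilated by absorbing $-\widetilde G^{(p_1,p_2)}(p_1+1)^{-1}y_1^{p_1+1}y_2^{p_2+1}$ into $\widetilde F,$ which is legitimate exactly because this monomial is non-reduced-resonant. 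This shows $\mathfrak Y$ is a resonant set, so Theorem~2 (through Proposition~1) delivers the GNF~\eqref{Eq16}.

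The step I expect to be most delicate is the exponent bookkeeping, especially at the degree-$2m$ boundary where $y_1^my_2^m=mH_\gamma^{[\chi+\delta]}$ is the Hamiltonian itself; there I would confirm that the lowest diagonal correction $(r=0)$ still falls under case~ii and that the edge terms of the bulk sums are counted correctly, and I would note that for $m=1$ several index ranges become empty (only the diagonal family survives). Notably, in contrast to Corollary~2, every reduction here is of type~ii, so no auxiliary quasi-homogeneous change of variables $\mathcal Q=Cy_1^{q_1}y_2^{q_2}\mathcal E$ is needed.
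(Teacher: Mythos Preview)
Your proposal is correct and follows essentially the same route as the paper: you pick the identical minimal sets $\mathfrak S_\gamma^{[k]}$ and $\widetilde{\mathfrak S}_\gamma^{[k]}$ for $k>2m$ (the latter being the former with the diagonal monomial removed), read off from~\eqref{Eq13} that in all three ``either\dots or'' families $Y_1^{(p_1+1,p_2)}=Y_2^{(p_1,p_2+1)}=\widetilde G^{(p_1,p_2)}$, observe that each pair is a case~ii instance, and conclude via Theorem~2. The paper's proof is exactly this, only stated more tersely; your additional remarks (orthogonality of distinct monomials giving the nonvanishing Gram determinants, the absence of any case~iii reductions here in contrast to Corollary~2) are correct elaborations rather than deviations.
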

\begin{proof}\ \ For all $k>2m,$ choose
\begin{equation}
\notag
\begin{split}
&\mathfrak S_\gamma^{[k]}=
\{x_1^{p_1} x_2^{p_2}:\ p_1=\overline{0,m-2}, \text{ or } 
p_2=\overline{0,m-2}, \text{ or } p_1=p_2;\ |p|=k\,\},\\
&\widetilde{\mathfrak S}_\gamma^{[k]}=
\{x_1^{p_1} x_2^{p_2}:\ p_1=\overline{0,m-2} \text{ or } 
p_2=\overline{0,m-2};\ |p|=k\,\}.
\end{split}
\end{equation}
Then GPHNF~\eqref{Eq10} takes the form~\eqref{Eq16} where
$Y_1^{(i+m+2,m-2)}=Y_2^{(i+m+1,m-1)}=\widetilde G^{(i+m+1,m-2)},$
$Y_1^{(m-1,m+j+1)}=Y_2^{(m-2,m+j+2)}=\widetilde G^{(m-2,m+j+1)},$
$Y_1^{(r+m+1,r+m)}=Y_2^{(r+m,r+m+1)}=\widetilde G^{(r+m,r+m)}.$
Hence, the corollary follows from Theorem~2.
\end{proof}
\subsection{The unperturbed part $(\pm x_2^{m-1}, x_1^{l-1})$ with 
$l\ge m\ge2$}
Consider system~\eqref{Eq8} where $\mathcal H^{[\chi]}_\gamma=
(\pm x_2^{m-1}, x_1^{l-1}),$ $H^{[\chi+\delta]}_\gamma=x_1^l/l\mp x_2^m/m,$ 
$\gamma=(m/d,l/d),$ $\chi=(lm-l-m)/d,$ $l\ge m\ge2,$ and $d=\mathrm{GCD}(l,m).$
\begin{lemma}
\label{Lmbin}\ \
Minimal Hamiltonian resonant and reduced resonant sets in g.~d. $k>lm/d$ have 
the view
\begin{equation}
\notag
\begin{split}
&\mathfrak S_\gamma^{[k]}=\{x_1^{p_1-r_{p} l}x_2^{p_2+r_{p} m}:\ 
p_1\not\equiv-1\ \mathrm{mod}\,l,\ p_2=\overline{0,m-2}\,\},\\
&\widetilde{\mathfrak S}_\gamma^{[k]}=
\{x_1^{p_1-\widetilde r_{p} l}x_2^{p_2+\widetilde r_{p} m}:\
p_1\not\equiv-1,0\ \mathrm{mod}\,l,\ p_2=0 \text{ or }
p_1\not\equiv-1\ \mathrm{mod}\,l,\ p_2=\overline{1,m-2}\,\}
\end{split}
\end{equation}
where $\langle p,\gamma\rangle=k,$ and $r_{p},\widetilde r_{p}$ are arbitrary 
integers such that $0\le r_{p},\widetilde r_{p}\le [p_1/l].$
\end{lemma}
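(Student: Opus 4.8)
The plan is to convert the two determinant conditions of Definition~\ref{Nabor} into statements about the cokernel of $\mathcal H_\gamma^{[\chi]}$, and then to evaluate that cokernel by splitting the space of QHPs into residue classes on which $\mathcal H_\gamma^{[\chi]}$ reduces to an elementary bidiagonal operator. The starting point is that monomials are orthogonal for the pairing $\langle\langle\,.\,,\,.\,\rangle\rangle$, with $\langle\langle x_1^{q_1}x_2^{q_2},x_1^{p_1}x_2^{p_2}\rangle\rangle=p_1!\,p_2!\,\delta_{pq}$, so that $\langle\langle R,x^p\rangle\rangle$ is, up to the nonzero factor $p!$, the coefficient of $x^p$ in $R$. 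Since $\mathfrak R_\gamma^{[k]}=\mathrm{Ker}\,{\mathcal H_\gamma^{[\chi]}}^{*}=\bigl(\mathcal H_\gamma^{[\chi]}(\mathfrak P_\gamma^{[k-\chi]})\bigr)^{\perp}$ and the pairing is nondegenerate, the matrix $\{\langle\langle R_{\gamma,i}^{[k]},S_{\gamma,j}^{[k]}\rangle\rangle\}$ is nonsingular exactly when the monomials $S_{\gamma,j}^{[k]}$ project to a basis of the cokernel $\mathfrak P_\gamma^{[k]}/\mathcal H_\gamma^{[\chi]}(\mathfrak P_\gamma^{[k-\chi]})$. Likewise, as $\bigl(\mathcal H_\gamma^{[\chi]}(\mathfrak P_\gamma^{[k+\chi]})\bigr)^{\perp}=\mathrm{Ker}\,\mathcal H_\gamma^{[\chi]}$, a reduced set is minimal iff the $\widetilde S_{\gamma,j}^{[k]}$ project to a basis of $\mathfrak P_\gamma^{[k]}/\bigl(\mathcal H_\gamma^{[\chi]}(\mathfrak P_\gamma^{[k-\chi]})+\mathrm{Ker}\,\mathcal H_\gamma^{[\chi]}|_{\mathfrak P_\gamma^{[k]}}\bigr)$. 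This reduces the whole lemma to describing these two quotients explicitly.

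Next I would compute $\mathcal H_\gamma^{[\chi]}(x_1^a x_2^b)=\pm a\,x_1^{a-1}x_2^{b+m-1}+b\,x_1^{a+l-1}x_2^{b-1}$ and use the key observation that both output monomials lie in the residue class $(a-1\bmod l,\ b-1\bmod m)$. Grouping the monomials of each generalized degree by the class $(a\bmod l,\ b\bmod m)$, the operator sends class $(\alpha+1,\beta+1)$ into class $(\alpha,\beta)$, so the cokernel splits as a direct sum over residue classes. Within one class the monomials of degree $k$ form a single chain $M_0,M_1,\dots,M_T$ under the shift $(a,b)\mapsto(a+l,b-m)$ — precisely the family $x_1^{p_1-r_p l}x_2^{p_2+r_p m}$ — and in this basis $\mathcal H_\gamma^{[\chi]}$ becomes the bidiagonal map $N_i\mapsto\pm a_i M_i+b_i M_{i+1}$. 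When $\alpha\not\equiv-1\ (\mathrm{mod}\ l)$ and $\beta\ne m-1$, none of the coefficients $a_i,b_i$ vanish, the chain relations force every $M_j$ to span the same one-dimensional quotient, so the class contributes cokernel dimension one and any single one of its monomials represents it — which is exactly the arbitrariness of $r_p$. When $\alpha\equiv-1\ (\mathrm{mod}\ l)$ or $\beta=m-1$, a boundary coefficient drops out (an $a_i$ with $a_i=0$, or a $b_i$ with $b_i=0$), the map becomes surjective onto the class, and the contribution is zero; these are precisely the classes omitted from $\mathfrak S_\gamma^{[k]}$.

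For the reduced set I would use that the integrals of $\mathcal H_\gamma^{[\chi]}$ are the powers of $H_\gamma^{[\chi+\delta]}=x_1^l/l\mp x_2^m/m$, so $\mathrm{Ker}\,\mathcal H_\gamma^{[\chi]}|_{\mathfrak P_\gamma^{[k]}}$ is spanned by $(H_\gamma^{[\chi+\delta]})^{kd/(lm)}$ when $(lm/d)\mid k$ and is trivial otherwise. Every monomial of this integral has first exponent divisible by $l$ and second by $m$, so it lies entirely in the class $(0,0)$ and spans its one-dimensional cokernel; quotienting by it therefore removes exactly the class $(0,0)$. Since that class is nonempty in degree $k$ only when $(lm/d)\mid k$, this accounts uniformly for the extra exclusion $p_1\not\equiv0\ (\mathrm{mod}\ l)$ imposed on $\widetilde{\mathfrak S}_\gamma^{[k]}$ in the case $p_2=0$ and leaves $\widetilde{\mathfrak S}_\gamma^{[k]}=\mathfrak S_\gamma^{[k]}$ in all other degrees.

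The step I expect to be the main obstacle is the lattice-point bookkeeping that pins the chain lengths down exactly: verifying that $T=S+1$ in the good classes, so that the cokernel is genuinely one-dimensional, and that the vanishing boundary coefficient really makes the map onto in the omitted classes. These counts behave differently at the ends of the chains, where $a$ or $b$ reaches $0$, and it is exactly there that the conditions $p_1\not\equiv-1\ (\mathrm{mod}\ l)$ and $p_2=\overline{0,m-2}$ become sharp; carrying this out carefully, degree by degree and class by class, is where the real work lies.
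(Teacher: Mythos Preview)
Your approach is correct and is essentially the dual of the paper's argument. The paper works directly with $\mathrm{Ker}\,{\mathcal H_\gamma^{[\chi]}}^{*}$ rather than with the cokernel of $\mathcal H_\gamma^{[\chi]}$: it applies ${\mathcal H_\gamma^{[\chi]}}^{*}=x_2\,\partial_1^{\,l-1}\pm x_1\,\partial_2^{\,m-1}$ to a generic polynomial $R=\sum R^{(p_1,p_2)}x_1^{p_1}x_2^{p_2}$ and reads off the relations
\[
R^{(i,m-1)}=0,\qquad R^{(l-1,j)}=0,\qquad (l-1)!\,C^{l-1}_{i+l}\,R^{(i+l,j)}\pm(m-1)!\,C^{m-1}_{j+m}\,R^{(i,j+m)}=0,
\]
propagates the first two through the third to obtain $R^{(kl-1,j)}=R^{(i,km-1)}=0$ for all $k\ge1$, and observes that on each remaining chain the recursion determines every coefficient from any one of them. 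This is exactly your bidiagonal chain analysis seen from the kernel side: the two-term recursion above is your shift $(a,b)\mapsto(a+l,b-m)$ between adjacent chain members, and the vanishing conditions are precisely your boundary classes $\alpha\equiv-1\ (\mathrm{mod}\,l)$ and $\beta=m-1$. What the paper gains is brevity --- the recursion is written down in one line and the conclusion stated without explicitly confronting the chain-length count $T=S+1$ that you correctly identify as the point needing care --- while your cokernel framing makes the residue-class decomposition explicit and would transfer more cleanly to other $\mathcal H_\gamma^{[\chi]}$. The reduced-set step is handled identically in both: the paper also simply invokes that every quasi-homogeneous polynomial integral of $\mathcal H_\gamma^{[\chi]}$ is a power of $H_\gamma^{[\chi+\delta]}$.
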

\begin{proof}\ \
Let $R=\sum\limits^\infty_{p_1,p_2=0} R^{(p_1,p_2)} x_1^{p_1} 
x_2^{p_2}\in\mathfrak R_\gamma.$

According to formula~\eqref{Eq7} $\left.\mathcal H_\gamma^{[\chi]}\right.^*=
x_2({\partial^{l-1}}/{\partial x_1^{l-1}})\pm
x_1({\partial^{m-1}}/{\partial x_2^{m-1}}),$ thus
\begin{equation}
\notag
\sum^\infty_{i=l-1}\sum^\infty_{j=0}
(l-1)! C^{l-1}_i R^{(i,j)} x_1^{i-l+1} x_2^{j+1}\pm
\sum^\infty_{i=0}\sum^\infty_{j=m-1}
(m-1)! C^{m-1}_j R^{(i,j)} x_1^{i+1} x_2^{j-m+1} =0,
\end{equation}
thence, setting the coefficients of $x_1^{i+1},x_2^{j+1},$ and 
$x_1^{i+1}x_2^{j+1}$ to zero, we obtain the equations
\begin{equation}
\label{Eq17}
R^{(i,m-1)}=0,\quad R^{(l-1,j)}=0,\quad(l-1)! C^{l-1}_{i+l} R^{(i+l,j)}\pm
(m-1)! C^{m-1}_{j+m} R^{(i,j+m)}=0\qquad(i,j\ge0).
\end{equation}
Hence, by induction, we find that $R^{(i,km-1)},R^{(kl-1,j)}=0$ for all 
$k\ge1.$

It also follows from equations~\eqref{Eq17} that any resonant polynomial $R$ 
is uniquely defined by its coefficients $R^{(p_1-r_{p}l,p_2+r_{p}m)}$ of the 
monomials $x_1^{p_1-r_{p}l}x_2^{p_2+r_{p}m}$ where $p_2\le m-2$ 
$(0\le r_{p}\le [p_1/l]).$ Thus, set $\mathfrak S_\gamma^{[k]}$ of such 
monomials is a minimal Hamiltonian resonant set.

For the Hamiltonian reduced resonant set, the lemma follows from the fact that 
any quasi-homogeneous polynomial integral for $\mathcal H^{[\chi]}_\gamma$ is 
a power of $H^{[\chi+\delta]}_\gamma.$
\end{proof}

For each given $k\in \mathbb Z,$ denote $\theta[k]=0$ if $k<0,$ and 
$\theta[k]=1$ if $k\ge0.$
\begin{corollary}\ \ 
Given any system~\eqref{Eq1} with the unperturbed part $(\pm x_2^{m-1}, 
x_1^{l-1})$ where $l\ge m\ge2,$ then there exists a near-identity formal 
transformation~\eqref{Eq2} that brings it into the GNF
\begin{equation}
\label{Eq18}
\begin{split}
&\dot y_1=\pm y_2^{m-1}+\sum_{j=1}^{m-2}y_2^{j-1}\biggl(
\sum_{\substack{i\not\equiv0,-1\,\mathrm{mod}\, l\\ i>l(1-j/m)}}
Y_1^{(i,j-1)} y_1^{i} +
\sum_{r=1+\theta[m-jl]}^\infty Y_1^{(r l-1,j-1)} y_1^{rl-1}+\\
&\qquad+\sum_{s=1}^\infty Y_1^{(sl,j-1)} y_1^{sl}\biggr)+
\sum_{\substack{i\not\equiv0\,\mathrm{mod}\, l\\  i>l/m}} Y_1^{(i,m-2)} 
y_1^{i}y_2^{m-2},\\
&\dot y_2=y_1^{l-1}+\sum_{j=1}^{m-2}y_2^{j}\biggl(
\sum_{\substack{i\not\equiv0,-1\,\mathrm{mod}\, l\\ i>l(1-j/m)}}
Y_2^{(i-1,j)}y_1^{i-1}+\sum_{r=1+\theta[m-jl]}^\infty Y_2^{(rl-2,j)} 
y_1^{rl-2}+\\
&\qquad+\sum_{s=1}^\infty Y_2^{(sl-1,j)} y_1^{sl-1}\biggr)+
\sum_{\substack{i\not\equiv0,-1\,\mathrm{mod}\, l\\ i>l}} Y_2^{(i-1,0)} 
y_1^{i-1}+\sum_{\substack{i\not\equiv0\,\mathrm{mod}\, l\\  i>l/m}}
Y_2^{(i-1,m-1)} y_1^{i-1}y_2^{m-1}
\end{split}
\end{equation}
where for each $i\not\equiv0\mod l,$ $i>l/m,$ $j=\overline{1,m-2},$ 
$r\ge 1+\theta[m-jl],$ and $s\ge1,$ we take either $Y_1^{(i,m-2)}=0$ or 
$Y_2^{(i-1,m-1)}=0,$ either $Y_1^{(r l-1,j-1)}=0$ or $Y_2^{(rl-2,j)}=0,$
either $Y_1^{(sl,j-1)}=0$ or $Y_2^{(sl-1,j)}=0,$ except for the case where 
$l=m,$ in which for $j=m-2,$ we take $Y_1^{(sm,m-3)}=0$ in the pairs 
$\{Y_1^{(sm,m-3)},Y_2^{(sm-1,m-2)}\}.$ 
\end{corollary}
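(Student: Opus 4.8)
The plan is to follow the same scheme as in the three preceding corollaries, now with the nontrivial weight $\gamma=(m/d,l/d)$. First I would fix, for every g.~d.\ $k>lm/d$, the minimal Hamiltonian resonant and reduced resonant sets $\mathfrak S_\gamma^{[k]}$ and $\widetilde{\mathfrak S}_\gamma^{[k]}$ provided by Lemma~\ref{Lmbin}. Substituting them into the GPHNF~\eqref{Eq10} and expressing the coefficients of the perturbation through $\widetilde F$ and $\widetilde G$ by means of the translation formulas~\eqref{Eq13} with $\gamma_1=m/d$, $\gamma_2=l/d$, one obtains exactly the block structure displayed in~\eqref{Eq18}. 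The organizing observation is that $x_1^l$ and $x_2^m$ both have g.~d.\ $lm/d$, so the shift $(a,b)\mapsto(a-l,b+m)$ preserves the g.~d.; each monomial of a given g.~d.\ thus lies in one such orbit, and the representatives with $p_2=\overline{0,m-2}$ and $p_1\not\equiv-1\bmod l$ from Lemma~\ref{Lmbin} furnish precisely the admissible indices appearing in~\eqref{Eq18}.

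Having written the GPHNF in the form~\eqref{Eq18}, I would invoke Theorem~2 to pass to a GNF. For this it suffices to run pair by pair through the coefficients $Y_1^{(p_1+1,p_2)},Y_2^{(p_1,p_2+1)}$ and classify each pair according to cases i)--iii) of the construction preceding Theorem~2, by testing whether $y_1^{p_1+1}y_2^{p_2+1}\in\widetilde{\mathfrak S}_\gamma$ and $y_1^{p_1}y_2^{p_2}\in\mathfrak S_\gamma$. In case i) both coefficients survive, and a direct check of the membership conditions shows these are exactly the pairs with $i\not\equiv0,-1\bmod l$, producing the off-diagonal sums of~\eqref{Eq18}; in cases ii) and iii) exactly one coefficient of the pair may be cleared, which is the content of every stated alternative ``either $Y_1=0$ or $Y_2=0$''. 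The summation thresholds are then forced: the condition $i>l(1-j/m)$ is merely the degree bound $\langle p,\gamma\rangle>lm/d$ of Lemma~\ref{Lmbin} rewritten, while $r\ge1+\theta[m-jl]$ and $s\ge d+\cdots$ come from combining this degree bound with the nonnegativity of the exponents $p_1-r_pl\ge0$ and the range $0\le r_p,\widetilde r_p\le[p_1/l]$. Tracking these is what yields the precise limits.

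The one genuinely new feature is the boundary case $l=m$, where $d=l=m$ and the weight degenerates to $\gamma=(1,1)$. Here the integral $H_\gamma^{[\chi+\delta]}$ and its powers introduce extra coincidences among the orbit representatives, and the general classification above has to be corrected in the pairs $\{Y_1^{(sm,m-3)},Y_2^{(sm-1,m-2)}\}$ sitting at $j=m-2$. A direct inspection of the two bracket components in case iii) shows that the component $[\mathcal H_\gamma^{[\chi]},y_1^{q_1}y_2^{q_2}\mathcal E_\gamma]_2^{(p_1,p_2+1)}$ needed to let $Y_1^{(sm,m-3)}$ survive vanishes for every admissible $q$, so the only available elimination direction is $Y_1^{(sm,m-3)}=0$, leaving $Y_2^{(sm-1,m-2)}$; this is exactly the exceptional rule in the statement.

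I expect the main obstacle to be the verification demanded by case iii) in general: for each relevant pair with $y_1^{p_1}y_2^{p_2}\notin\mathfrak S_\gamma$ one must exhibit a monomial $y_1^{q_1}y_2^{q_2}$ with $\langle p-q,\gamma\rangle=\chi$ for which the appropriate bracket component $[\mathcal H_\gamma^{[\chi]},y_1^{q_1}y_2^{q_2}\mathcal E_\gamma]_1^{(p_1+1,p_2)}$ (or the $2$-component) is nonzero, and, when several such pairs occur in one g.~d., to check that the corresponding linear system for the transformation coefficients has nonzero determinant, as guaranteed by the Remark following Theorem~2. Carrying this out reduces to explicit computation with the two monomial summands of $\mathcal H_\gamma^{[\chi]}=(\pm x_2^{m-1},x_1^{l-1})$, but the bookkeeping modulo $l$ on $p_1$ makes it the most delicate step.
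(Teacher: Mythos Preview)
Your plan is correct and matches the paper's own proof almost step for step: choose the sets from Lemma~\ref{Lmbin} with $r_p=\widetilde r_p=0$, read off the GPHNF in the form~\eqref{Eq18}, and then invoke Theorem~2. The only cosmetic difference is that the paper handles the case~iii) pairs $\{Y_1^{(sl,j-1)},Y_2^{(sl-1,j)}\}$ by an explicit computation---it takes $\mathcal Q=C\,y_1^{sl-l}y_2^{j}\mathcal E$ and checks that this shifts $Y_1^{(sl,j-1)}$ by $-Cj$ and $Y_2^{(sl-1,j)}$ by $C(l-j-2)$, so that exactly when $l=m$ and $j=m-2$ the second shift vanishes---whereas you phrase the same fact through the vanishing of the bracket component in the case~iii) criterion; the two formulations are equivalent.
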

\begin{proof}\ \ 
For all $k>lm/d,$ choose the Hamiltonian resonant and reduced resonant sets 
given by Lemma~\ref{Lmbin} with all $r_{p},\widetilde r_{p}=0:$
\begin{equation}
\notag
\begin{split}
&\mathfrak S_\gamma^{[k]}=\{x_1^{p_1} x_2^{p_2}:\ p_1\not\equiv-1\ 
\mathrm{mod}\,l,\ p_2=\overline{0,m-2};\ \langle p,\gamma\rangle=k\,\},\\
&\widetilde{\mathfrak S}_\gamma^{[k]}=\{x_1^{p_1} x_2^{p_2}:\ 
p_1\not\equiv-1,0\ \mathrm{mod}\,l,\ p_2=0 \text{ or }
p_1\not\equiv-1\ \mathrm{mod}\,l,\ p_2=\overline{1,m-2};\ 
\langle p,\gamma\rangle=k\,\}.
\end{split}
\end{equation}
Then GPHNF~\eqref{Eq10} takes the form~\eqref{Eq18} where
$Y_1^{(i,m-2)}=Y_2^{(i-1,m-1)}=\widetilde G^{(i-1,m-2)},$
$Y_1^{(r l-1,j-1)}=Y_2^{(rl-2,j)}=\widetilde G^{(rl-2,j-1)},$
$Y_1^{(sl,j-1)}=-j\widetilde F^{(sl,j)},$ and $Y_2^{(sl-1,j)}= s l 
\widetilde F^{(sl,j)}.$

Transformation~\eqref{Eq2} with $\mathcal Q=C y_1^{sl-l} y_2^{j} 
\mathcal E$ where $C=\mathrm{const}$ changes the coefficients 
$Y_1^{(sl,j-1)}$ and $Y_2^{(sl-1,j)}$ by $-C j$ and $C(l-j-2)$ respectively. 
Hence, for $l>m,$ as well as for $l=m$ and $j=\overline{1,m-3},$ we can set 
either coefficient in the pair $\{Y_1^{(sl,j-1)}, Y_2^{(sl-1,j)}\}$ to zero by 
choosing $C.$ And only in case where $l=m$ and $j=m-2,$ the coefficient 
$Y_2^{(sm-1,m-2)}$ does not change under this transformation. In this case, we 
zero the coefficent $Y_1^{(sm,m-3)}$ in system~\eqref{Eq18} by choosing 
$C=Y_1^{(sm,m-3)}/(m-3).$ Hence, the corollary follows from Theorem~2.
\end{proof}

For $m=2,$ GNF~\eqref{Eq18} is the so called \textit{second order 
Takens-Bogdanov normal form} that was obtained by Baider and Sanders 
in~\cite{sanders2}. In particular, for $l=3,\,m=2,$ formula~\eqref{Eq18} 
agrees with~\cite[Th.~4]{basov}, and in case $l=4,\,m=2,$ it is consistent 
with~\cite[Th.~3]{basov03}.
\newpage

\begin{thebibliography}{99}
%
\bibitem{basov03}
Basov,~V.V., \textit{A Generalized Normal Form and Formal Equivalence of 
Systems of Differential Equations with Zero Characteristic Numbers}, 
Differential Equations, Vol.~39, No.~2, pp.~165--181, 2003.
%
\bibitem{BV}
Basov,~V.V., Vaganyan,~A.S, \textit{Normal Forms of Hamiltonian Systems}, 
Differential Equations and Control Processes (Electronic journal 
http://www.math.spbu.ru/diffjournal), No.~4, pp.~86--107, 2010.
%
\bibitem{skit} 
Basov,~V.V., Skitovich,~A.V., \textit{A Generalized Normal Form and Formal 
Equivalence of Two-Dimensional Systems with Quadratic Zero Approximation: I}, 
Differential Equations, Vol.~39, No.~8, pp.~1067--1081, 2003.
%
\bibitem{fed} 
Basov,~V.V., Fedorova,~E.V, \textit{Two-dimensional Real Systems of Ordinary 
Differential Equations with Quadratic Unperturbed Parts: Classification and 
Degenerate Generalized Normal Forms}, Differential Equations and Control 
Processes (Electronic journal http://www.math.spbu.ru/diffjournal), No.~4, 
pp.~49--85, 2010.
%
\bibitem{basov}
Basov,~V.V., Fedotov,~A.A, \textit{Generalized normal forms for 
two-dimensional systems of ordinary differential equations with linear and 
quadratic unperturbed parts}, Vestnik St. Petersburg University: Mathematics, 
Vol.~40, No.~1, pp.~6--26, 2007.

\bibitem{sanders2}
Baider,~A., Sanders,~J., \textit{Further reduction of the Takens-Bogdanov 
Normal Form}, Journal of Differential Equations, Vol.~99, No.~2, pp.~205--244, 
1992.

\bibitem{takens} 
Takens,~F., \textit{Singularities of vector fields}, IHES, Vol.~43, No.~2, 
pp.~47--100, 1974.
\end{thebibliography}
\end{document}